\newcommand{\assign}{:=}
\newcommand{\mathd}{\mathrm{d}}
\newcommand{\nin}{\not\in}
\newcommand{\tmaffiliation}[1]{\thanks{#1}}
\newcommand{\tmmathbf}[1]{\ensuremath{\boldsymbol{#1}}}
\newcommand{\tmop}[1]{\ensuremath{\operatorname{#1}}}
\newtheorem{corollary}{Corollary}
\newtheorem{definition}{Definition}
\newtheorem{lemma}{Lemma}
\newtheorem{proposition}{Proposition}
\newtheorem{theorem}{Theorem}
\theoremstyle{remark}
\newtheorem*{remark}{Remark}
\begin{document}

\title{The scaling limit of uniform random plane maps, \textit{via} the
Ambj{\o}rn--Budd bijection}

\author{
  J{\'e}r{\'e}mie Bettinelli
	\tmaffiliation{CNRS \& Institut {\'E}lie Cartan, Nancy}
  \and
  Emmanuel Jacob
  \tmaffiliation{ENS de Lyon}
  \and
  Gr{\'e}gory Miermont
  \tmaffiliation{ENS de Lyon \& Institut Universitaire de France}
}

\maketitle

\begin{abstract}
  We prove that a uniform rooted plane map with $n$ edges converges in
  distribution after a suitable normalization to the Brownian map for the
  Gromov--Hausdorff topology. A recent bijection due to Ambj{\o}rn and Budd
  allows to derive this result by a direct coupling with a uniform random
  quadrangulation with~$n$ faces. 
\end{abstract}

\section{Introduction}

\subsection{Context}

The topic of limits of random maps has met an increasing interest over the
last two decades, as it is recognized that such objects provide natural model
of discrete and continuous 2-dimensional geometries {\cite{AmDuJo97,AnSc03}}.
Recall that a plane map is a cellular embedding of a finite graph (possibly
with multiple edges and loops) into the sphere, considered up to
orientation-preserving homeomorphisms. By \emph{cellular}, we mean that
the faces of the map (the connected components of the complement of edges) are
homeomorphic to $2$-dimensional open disks. A popular setting for studying
scaling limits of random maps is the following. We see a map $\tmmathbf{m}$
as a metric space by endowing the set $V(\tmmathbf{m})$ of its vertices with its natural graph
metric $d_{\tmmathbf{m}}$: the graph distance between two vertices is the
minimal number of edges of a path linking them. We then choose at random a map
of ``size'' $n$ in a given class and look at the limit as $n \rightarrow
\infty$ in the sense of the Gromov--Hausdorff topology \cite{gromov99} of
the corresponding metric space, once rescaled by the proper factor.

This question first arose in \cite{CSise}, focusing on the class of plane
quadrangulations, that is, maps whose faces are of degree $4$, and where the
size is defined as the number of faces. A series of papers, including
\cite{MM05,legall06,miertess,legall08,BoGu08b}, have been motivated by this
question and contributed to its solution, which was completed in
\cite{legall11,miermont11} by different approaches. Specifically, there
exists a random compact metric space~$\tmmathbf{S}$ called \emph{the Brownian
map} such that, if $Q_{n}$ denotes a uniform random (rooted) quadrangulation
with $n$ faces, then the following convergence holds in distribution for the
Gromov--Hausdorff topology on the
set of isometry classes of compact metric spaces:
\begin{equation}
  \left( \frac{9}{8n} \right)^{1/4} Q_{n} \xrightarrow[n \to \infty]{( d )}
  \tmmathbf{S}. \label{eq:convQ}
\end{equation}
Here and later in this paper, if $\mathbb{X}= ( X,d )$ is a metric
space and $a>0$, we let $a\mathbb{X}= ( X,a d )$ be the rescaled
space, and we understand a map $\tmmathbf{m}$ as the metric space $( V
( \tmmathbf{m} ) ,d_{\tmmathbf{m}} )$.

Le~Gall {\cite{legall11}} also gave a general method to prove such a limit
theorem in a broader context, that applies in particular to uniform
$p$-angulations (maps whose faces are of degree $p$) for any $p \in \{
3,4,6,8,10 \ldots \}$. When this method applies, the scaling factor $n^{-1/4}$
and the limiting metric space $\tmmathbf{S}$ are the same, only the scaling
constant $( 9/8 )^{1/4}$ may differ. One says that the Brownian map possesses
a property of universality, and one actually expects the method to work for
many more ``reasonable'' classes of maps. Roughly speaking, this approach
relies on two ingredients:
\begin{enumerate}[($i$)]
  \item A bijective encoding of the class of maps by a family of labeled trees
  that converge to the Brownian snake, in which the labels represent the
  distances to a uniform point of the map.\label{ing1}
  
  \item A property of invariance under re-rooting for the model under
  consideration and for the limiting space $\tmmathbf{S}$.\label{ing2}
\end{enumerate}
Interestingly enough, as of now, the only known method to derive the
invariance under re-rooting of the Brownian map needed in~($\ref{ing2}$)  is by using
the convergence of some root-invariant discrete model to the Brownian map, as
in (\ref{eq:convQ}). A robust and widely used bijective encoding in
obtaining~($\ref{ing1}$) is the Cori--Vauquelin--Schaeffer bijection
{\cite{CoVa,schaeffer98}} and its generalization by
Bouttier--Di~Francesco--Guitter {\cite{BdFGmobiles}}, see for instance
{\cite{MaMi07,mierinv}}. However, this bijection becomes technically uneasy to
manipulate when dealing with non-bipartite maps (with the notable exception of
triangulations) or maps with topological constraints. Recently, Addario-Berry
and Albenque {\cite{addarioalbenque2013simple}} obtained the convergence to
the Brownian map for the classes of simple triangulations and simple quadrangulations (maps without loops or multiple edges), by using another
bijection due to Poulalhon and Schaeffer {\cite{PoSc06}}.

In the present paper, we continue this line of research with another
fundamental class of maps, namely uniform random plane maps with a prescribed
number of edges. The key to our study is to use a combination of the
Cori--Vauquelin--Schaeffer bijection, together with a recent bijection due to
Ambj{\o}rn and Budd {\cite{ambjornbudd}}, that allows to couple directly a
uniform (pointed) map with $n$ edges and a uniform quadrangulation with $n$
faces, while preserving distances asymptotically. This allows to transfer
known results from uniform quadrangulations to uniform maps, in a way that is
comparatively easier than a method based on the
Bouttier--Di~Francesco--Guitter bijection. However, and this was a bit of a
surprise to us, proving the appropriate re-rooting invariance necessary to
apply~($\ref{ing2}$) above does require some substantial work.

We note that our results answer a question asked in the very recent preprint
{\cite{BoFuGu13}}. Let us also mention that, in parallel to our work,
C{\'e}line Abraham {\cite{abr14}} has obtained a similar result to ours for
uniform bipartite maps, by using an approach based on the
Bouttier--Di~Francesco--Guitter bijection.

\subsection{Main results}

We need to introduce some notation and terminology at this point. If $e$ is an
oriented edge of a map, the face that lies to the left of $e$ will be called
the face \emph{incident} to $e$. We denote by $e^{-}$, $e^{+} $ and
$\tmop{rev} ( e )$ the origin, end and reverse of the oriented edge $e$. It
will be convenient to consider \emph{rooted} maps, that is, maps given
with a distinguished oriented edge called the \emph{root}, and usually
denoted by~$e_{\ast}$. The {\emph{root vertex}} is by definition the vertex~$e_{\ast}^{-}$.

We let $\mathcal{M}_{n}$ be the set of rooted plane maps with $n$ edges, and
$\mathcal{M}_{n}^{\bullet}$ be the set of rooted and \emph{pointed} plane
maps with $n$ edges, i.e., of pairs $( \tmmathbf{m},v_{\ast} )$ where
$\tmmathbf{m} \in \mathcal{M}_{n}$ and $v_{\ast}$ is a distinguished element
of $V ( \tmmathbf{m} )$.

Similarly, we let $\mathcal{Q}_{n}$ (resp.\ $\mathcal{Q}_{n}^{\bullet}$) be
the set of rooted (resp.\ rooted and pointed) quadrangulations with $n$ faces.
We also let $\mathbb{T}_{n}$ be the set of well-labeled trees with $n$ edges,
i.e.,~of pairs $( \tmmathbf{t},\tmmathbf{l} )$ where~$\tmmathbf{t}$ is a
rooted plane tree with $n$ edges, and $\tmmathbf{l}$ is an integer-valued
label function on the vertices of~$\tmmathbf{t}$ that assigns the value $0$ to
the root vertex of $\tmmathbf{t}$, and such that $| \tmmathbf{l} ( u )
-\tmmathbf{l} ( v ) | \leqslant 1$ whenever $u$ and $v$ are neighboring
vertices in~$\tmmathbf{t}$.

There exists a well-known correspondence, sometimes called the
\emph{trivial bijection}, between the sets $\mathcal{M}_{n}$ and
$\mathcal{Q}_{n}$. Starting from a rooted map $\tmmathbf{m}$, we
add a vertex inside each face of $\tmmathbf{m}$, and join this vertex to every
corner of the corresponding face by a family of non-crossing arcs. If we
remove the relative interiors of the edges of $\tmmathbf{m}$, then the map
formed by the added arcs is a quadrangulation~$\tmmathbf{q}$, which we can
root in a natural way from the root of $\tmmathbf{m}$ by fixing some
convention. In this construction, the set of vertices of $\tmmathbf{m}$ is
exactly the set $V_{0} ( \tmmathbf{q} )$ of vertices of $\tmmathbf{q}$ that
are at even distance from the root vertex: this comes from the natural
bipartition $V_{0} ( \tmmathbf{q} ) \sqcup V_{1} ( \tmmathbf{q} )$ of $V (
\tmmathbf{q} )$ given by the vertices of $\tmmathbf{m}$ and the vertices that
are added in the faces of $\tmmathbf{m}$.

However, the graph distances in $\tmmathbf{m}$ and those in $\tmmathbf{q}$
are not related in an obvious way, except for the elementary bound
\[ d_{\tmmathbf{q}} ( u,v ) \leqslant 2d_{\tmmathbf{m}} ( u,v ) \leqslant
   \frac{\Delta ( \tmmathbf{m} )}{2} \, d_{\tmmathbf{q}} ( u,v )  
   \qquad u,v \in V ( \tmmathbf{m} ) =V_{0} ( \tmmathbf{q} )  ,
\]
where $\Delta ( \tmmathbf{m} )$ denotes the largest degree of a face
in~$\tmmathbf{m}$. On the other hand, it was noticed recently by Ambj{\o}rn
and Budd {\cite{ambjornbudd}} that there exists another natural bijection
between $\mathcal{M}_{n}^{\bullet} \times \{ 0,1 \}$ and
$\mathcal{Q}_{n}^{\bullet}$, which is much more faithful to graph distances.
This bijection is constructed in a way that is very similar to the well-known
Cori--Vauquelin--Schaeffer ($\tmop{CVS}$) bijection between
$\mathcal{Q}_{n}^{\bullet}$ and $\mathbb{T}_{n} \times \{ 0,1
\}$, and is in some sense dual to it. For the reader's convenience, we will
introduce the two bijections simultaneously in Section \ref{sec:CVSAB}.

The Ambj{\o}rn--Budd ($\tmop{AB}$) bijection provides a natural
coupling between a uniform random element $( Q_{n} ,v_{\ast} )$ of
$\mathcal{Q}_{n}^{\bullet}$, and a uniform random element $(
M_{n}^{\bullet} ,v_{\ast} )$ of $\mathcal{M}_{n}^{\bullet}$. Using
this coupling, it was observed already \cite{ambjornbudd,BoFuGu13}
that the ``two-point functions'' that govern the limit
distribution of the distances between two uniformly chosen points in
$M_{n}^{\bullet}$ and $Q_{n}$ coincide. In this paper, we show that
much more is true. 
\begin{theorem}
  \label{sec:main-results}
  Let $( Q_{n} ,v_{\ast} )$ and $(M_{n}^{\bullet} ,v_{\ast} )$ be
  uniform random elements of $\mathcal{Q}_{n}^{\bullet}$ and
  $\mathcal{M}_{n}^{\bullet}$ respectively, that are in correspondence
  {\it via} the Ambj{\o}rn--Budd bijection. Then we have the following joint convergence in
distribution for the Gromov--Hausdorff topology
$$  \left(\! \left( \frac{9}{8n} \right)^{1/4} M_{n}^{\bullet} , \left(
  \frac{9}{8n} \right)^{1/4} Q_{n} \right) \underset{n \rightarrow
  \infty}{\xrightarrow{( d )}} ( \tmmathbf{S},\tmmathbf{S} )  ,$$
where $\tmmathbf{S}$ is the Brownian map.
\end{theorem}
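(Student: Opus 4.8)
The plan is to realize $Q_n$ and $M_n^\bullet$ as two metrics on essentially the same vertex set, both produced by a single random labeled tree, and then to show that these two metrics coincide in the scaling limit. Composing the $\tmop{AB}$ and $\tmop{CVS}$ bijections yields a coupling of the uniform element $(Q_n,v_\ast)$ of $\mathcal{Q}_n^\bullet$ and the uniform element $(M_n^\bullet,v_\ast)$ of $\mathcal{M}_n^\bullet$ through a common uniform random well-labeled tree $(\tmmathbf{t}_n,\tmmathbf{l}_n)\in\mathbb{T}_n$ (together with an independent uniform bit); in this coupling the vertex set of $M_n^\bullet$ is canonically a subset of that of $Q_n$, and the labels $\tmmathbf{l}_n$ encode, up to bounded deterministic corrections, both the graph distances from $v_\ast$ in $Q_n$ and the graph distances from $v_\ast$ in $M_n^\bullet$.

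With this encoding, I would reduce the theorem to the single estimate
\begin{equation}
  \eta_n \assign \left(\frac{9}{8n}\right)^{1/4}\,\sup_{u,v\in V(M_n^\bullet)}\bigl|\,d_{M_n^\bullet}(u,v)-d_{Q_n}(u,v)\,\bigr|\ \xrightarrow[n\to\infty]{\ \mathbb{P}\ }\ 0. \label{eq:plan-eta}
\end{equation}
Indeed, $V(M_n^\bullet)$ is $c$-dense in $(V(Q_n),d_{Q_n})$ for a universal constant $c$, so the inclusion $V(M_n^\bullet)\hookrightarrow V(Q_n)$ gives a correspondence between $(9/8n)^{1/4}M_n^\bullet$ and $(9/8n)^{1/4}Q_n$ of distortion at most $\eta_n+2c(9/8n)^{1/4}$; hence $d_{\mathrm{GH}}\bigl((9/8n)^{1/4}M_n^\bullet,(9/8n)^{1/4}Q_n\bigr)\to 0$ in probability. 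Combined with the known convergence~(\ref{eq:convQ}) and the elementary fact that $X_n\xrightarrow{(d)}X$ and $d_{\mathrm{GH}}(X_n,Y_n)\to 0$ in probability imply $(X_n,Y_n)\xrightarrow{(d)}(X,X)$, this yields Theorem~\ref{sec:main-results}, and in passing the marginal convergence $(9/8n)^{1/4}M_n^\bullet\xrightarrow{(d)}\tmmathbf{S}$.

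To establish~(\ref{eq:plan-eta}) I would compare both metrics to functionals of the common labeled tree. Recall that in Le~Gall's proof of~(\ref{eq:convQ}) the function $(9/8n)^{1/4}d_{Q_n}$, pulled back to $[0,1]^2$ through the contour parametrization of $\tmmathbf{t}_n$, converges uniformly to the Brownian map pseudo-metric, being squeezed at finite $n$ between an upper functional of the labels — realized by the explicit successor geodesics of the $\tmop{CVS}$ bijection — and a cactus-type lower functional of the labels, both of which converge after rescaling to that same limit. The first half of the argument is the corresponding \emph{upper} bound for $\tmmathbf{m}$: the $\tmop{AB}$ construction provides, just as Schaeffer's does, explicit paths in $\tmmathbf{m}$ between any two vertices whose lengths are governed by the labels through the same formula, so that $d_{M_n^\bullet}$ is dominated (up to a bounded error) by the very functional that dominates $d_{Q_n}$; this bound also lets one transfer tightness of $(9/8n)^{1/4}Q_n$ to tightness of $(9/8n)^{1/4}M_n^\bullet$.

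The hard part, which I expect to be the main obstacle, is the matching \emph{lower} bound for $d_{M_n^\bullet}$: one must rule out shortcuts in $\tmmathbf{m}$ invisible to the labels, namely show that along a geodesic of $\tmmathbf{m}$ the labels cannot dip substantially below the level allowed by the cactus functional. For quadrangulations this rests on Le~Gall's fine analysis of geodesics together with the re-rooting invariance of the Brownian map (which is itself available to us through~(\ref{eq:convQ})); carrying it over to arbitrary plane maps demands a precise understanding of the successor structure that the $\tmop{AB}$ bijection induces on the corners of $\tmmathbf{m}$, and this combinatorial analysis is the technical core of the argument. Once $(9/8n)^{1/4}d_{M_n^\bullet}$ is pinned, uniformly on $[0,1]^2$ and up to $o(1)$, between functionals converging to the same limit as $(9/8n)^{1/4}d_{Q_n}$, both converge to that common pseudo-metric, which is precisely~(\ref{eq:plan-eta}), and the theorem follows.
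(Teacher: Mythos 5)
Your reduction to $\eta_n\to 0$ in probability is sound, and is equivalent to what the paper actually proves, namely that the subsequential limit $\tilde D$ of $(9/8n)^{1/4}d_{M_n^\bullet}$ (pulled back to $[0,1]^2$ by the contour of the $\tmop{CVS}$ tree) coincides a.s.\ with the Brownian map pseudo-metric $D$. The upper bound you describe is also the one the paper uses: special edges of the $\tmop{AB}$ map shadow the left-most successor geodesics of $Q_n$ (Proposition~\ref{special}, Corollary~\ref{majoration}), giving $\tilde D_n\le D_n^\circ+\Delta_n$ and, after Lemma~\ref{oplogn}, tightness and $\tilde D\le d_Z$ in the limit. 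The gap lies in the mechanism you propose for the lower bound. There is no sandwich of $(9/8n)^{1/4}d_{Q_n}$ between an upper and a cactus-type lower label functional \emph{that converge to the same limit}: the upper functional $D^\circ_{(n)}$ converges to $d_Z$, whereas the cactus-type lower bound converges to the strictly smaller Brownian cactus pseudo-metric, and $D$ sits strictly between the two. Le~Gall does not identify $D$ by a squeeze; he identifies it as the largest pseudo-metric with $\{d_{\tmmathbf{e}}=0\}\subset\{D=0\}$ and $D\le d_Z$, using a re-rooting identity $D(U_1,U_2)\overset{(d)}{=}D(s_*,U_1)$. For your plan to close, you would in any case have to reprove the analogous identity for $\tilde D$.

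This re-rooting identity for $\tilde D$ is exactly what the paper establishes, and it is not an automatic consequence of the re-rooting invariance inherited from~\eqref{eq:convQ}: the contour parametrization picks a vertex of $M_n^\bullet$ with probability proportional to its degree (since each of the $2n$ edges of $Q_n$ pointing toward $v_*$ lands on a vertex of $M_n^\bullet$, and these edges concentrate on high-degree vertices), unlike for $Q_n$ where it gives a uniform vertex up to a bounded bias. The paper's Section~\ref{reroot} handles this by first proving $\tilde D(s_*,\cdot)=D(s_*,\cdot)=Z-Z_{s_*}$ from the exact identity~\eqref{eq:samegeod}, and then coupling the degree-biased vertex $\tilde v_{\lfloor 2nU_1\rfloor}$ with a genuinely uniform vertex $v_{j_n}$ of $V(M_n^\bullet)\setminus\{v_*\}$ at distance $o(n^{1/4})$; this rests on Lemma~\ref{vmax}, a decomposition of $T_n$ into subtrees hanging off the spine, and the $L^2$ control of $\#V(M_n^\bullet)$ from Proposition~\ref{TV}. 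Your proposal correctly flags that the hard step involves ``a precise understanding of the successor structure the $\tmop{AB}$ bijection induces,'' but without the degree-bias issue being identified and resolved, the argument does not close.
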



A very striking aspect of this is that the scaling constant $( 9/8
)^{1/4}$ is the same for $M_{n}^{\bullet}$ and for $Q_{n}$. This
implies in particular that
$$n^{-1/4}\mathd_{\tmop{GH}} ( M_{n}^{\bullet} ,Q_{n} ) \xrightarrow[n\to\infty]{P}0$$
where $\mathd_{\tmop{GH}}$ is the Gromov--Hausdorff distance beween
two compact metric spaces, which, to paraphrase
the title of {\cite{marckert04}}, says that ``the
$\tmop{AB}$ bijection is asymptotically an isometry.'' Although obtaining this scaling
constant is theoretically possible using the methods of
{\cite{mierinv}}, the computation would be rather involved.

At the cost of an extra ``de-pointing lemma,'' (Proposition \ref{TV}) this will imply the following
result.

\begin{corollary}
  \label{principal}Let $M_{n}$ be a uniformly distributed random variable in
  $\mathcal{M}_{n}$. The following convergence in distribution holds for the
  Gromov--Hausdorff topology
  \[ \left( \frac{9}{8n} \right)^{1/4} M_{n} \xrightarrow[n \to \infty]{( d
     )} \tmmathbf{S} \]
  where $\tmmathbf{S}$ is the Brownian map. 
\end{corollary}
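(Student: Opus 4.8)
The plan is to deduce Corollary~\ref{principal} from Theorem~\ref{sec:main-results} by removing the pointing. The main point is that Theorem~\ref{sec:main-results} already gives the convergence of $(9/8n)^{1/4}M_n^\bullet$ to $\mathbf{S}$, where $M_n^\bullet$ is a \emph{pointed} uniform map; what remains is to pass from the law of the non-pointed map underlying $M_n^\bullet$ to the genuine uniform law on $\mathcal{M}_n$. First I would make precise the relation between these two laws. If $(\tmmathbf{m},v_\ast)$ is uniform on $\mathcal{M}_n^\bullet$, then the marginal law of $\tmmathbf{m}$ is \emph{not} uniform on $\mathcal{M}_n$: it is the size-biased law, with $\mathbb{P}(\tmmathbf{m}=\mathbf{m})$ proportional to $|V(\mathbf{m})|$. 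Concretely, for a bounded continuous functional $F$ on the space of (isometry classes of) compact metric spaces,
\[
  \mathbb{E}\big[F\big((9/8n)^{1/4}M_n^\bullet\big)\big]
  =\frac{\sum_{\mathbf{m}\in\mathcal{M}_n}|V(\mathbf{m})|\,F\big((9/8n)^{1/4}\mathbf{m}\big)}{\sum_{\mathbf{m}\in\mathcal{M}_n}|V(\mathbf{m})|}
  =\frac{\mathbb{E}\big[|V(M_n)|\,F\big((9/8n)^{1/4}M_n\big)\big]}{\mathbb{E}[|V(M_n)|]}.
\]
So the distribution of the rescaled uniform map $M_n$ is obtained from that of $M_n^\bullet$ by the inverse size-biasing with weight $1/|V(M_n)|$, and the task is to show this reweighting does not affect the scaling limit.

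The key input is Proposition~\ref{TV}, the ``de-pointing lemma,'' which I would invoke to control the normalized number of vertices; I expect it to assert that $|V(M_n)|/n$ (equivalently $|V(M_n^\bullet)|/n$) converges in probability to a deterministic constant $c>0$ — this is the natural analogue of the fact that a uniform quadrangulation with $n$ faces has $n+2$ vertices, here with fluctuations that are negligible at the relevant scale. Granting this, the argument is a standard absolute-continuity/uniform-integrability manipulation. Writing $W_n=|V(M_n)|/n$ and $W_n^\bullet=|V(M_n^\bullet)|/n$, one has the Radon--Nikodym relation $\mathbb{E}[F((9/8n)^{1/4}M_n)] = \mathbb{E}[(W_n^\bullet)^{-1}F((9/8n)^{1/4}M_n^\bullet)]/\mathbb{E}[(W_n^\bullet)^{-1}]$. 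Since $W_n^\bullet\to c$ in probability and $(W_n^\bullet)^{-1}$ is bounded (indeed $W_n^\bullet$ is bounded below: $|V|\geq 1$, and bounded above by $n+1$, but more relevantly $W_n^\bullet\geq 1/n$, so one should instead note $(W_n^\bullet)^{-1}\leq n$ is too weak — rather use that Proposition~\ref{TV} gives tightness of $W_n^\bullet$ away from $0$, i.e. $(W_n^\bullet)^{-1}$ is uniformly integrable), the weight $(W_n^\bullet)^{-1}/\mathbb{E}[(W_n^\bullet)^{-1}]$ converges to $1$ in $L^1$, and since $F((9/8n)^{1/4}M_n^\bullet)\to F(\mathbf{S})$ in distribution with $F$ bounded, a Slutsky-type argument gives $\mathbb{E}[F((9/8n)^{1/4}M_n)]\to\mathbb{E}[F(\mathbf{S})]$. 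As $F$ ranges over bounded continuous functionals this yields the claimed convergence in distribution for the Gromov--Hausdorff topology.

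The main obstacle is entirely contained in establishing the quantitative concentration of $|V(M_n)|$ needed for Proposition~\ref{TV}: one must show that, under the uniform law on $\mathcal{M}_n$, the number of vertices is concentrated enough (around its mean, of order $n$) that the reciprocal weight is uniformly integrable and that the size-biased law is asymptotically close in total variation, after the negligible rescaling, to the uniform law. This is a combinatorial/probabilistic estimate about the vertex-count statistic of random plane maps — plausibly handled via the generating-function or bijective description of $\mathcal{M}_n$ (e.g.\ through the trivial bijection with $\mathcal{Q}_n$ and the structure of the associated labeled objects, tracking how $|V(\mathbf{m})|=|V_0(\mathbf{q})|$ distributes) — and it is precisely the ``substantial work'' the introduction warns about. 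Once that estimate is in hand, the de-pointing step above is routine.
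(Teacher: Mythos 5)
Your high-level plan — deduce the corollary from Theorem~\ref{sec:main-results} via Proposition~\ref{TV}, using the size-biasing relation between $M_n$ and $M_n^\bullet$ — is in the right spirit, and your identification of the size-biased law and the Radon--Nikodym formula
\[
  \mathbb{E}\big[F\big((9/8n)^{1/4}M_n\big)\big]
  =\frac{\mathbb{E}\big[(W_n^\bullet)^{-1}F\big((9/8n)^{1/4}M_n^\bullet\big)\big]}{\mathbb{E}\big[(W_n^\bullet)^{-1}\big]}
\]
is correct. However, you have misidentified what Proposition~\ref{TV} actually asserts, and your chosen route introduces a gap that the paper's route avoids.

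\textbf{What Proposition~\ref{TV} really says.} It is not ``$|V(M_n)|/n$ converges in probability to a constant''; it is the stronger and directly usable statement that $\|\mu_n-\mu_n^\bullet\|\to 0$ in total variation, where $\mu_n$ (resp.\ $\mu_n^\bullet$) is the law of $M_n$ (resp.\ of $M_n^\bullet$, forgetting the marked vertex). The concentration you anticipate does appear in the \emph{proof} of Proposition~\ref{TV}, where it is shown that $2\#V(M_n)/n\to 1$ in $L^2$; the observation is that the total variation distance equals $\mathbb{E}\big[|2\#V(M_n)/(n+2)-1|\big]$, so $L^1$ (a fortiori $L^2$) convergence of the vertex count gives TV convergence directly. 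Once you have TV convergence, Corollary~\ref{principal} is immediate: $|\mathbb{E}_{\mu_n}[F]-\mathbb{E}_{\mu_n^\bullet}[F]|\le\|F\|_\infty\,\|\mu_n-\mu_n^\bullet\|\to 0$ for bounded $F$, so any distributional limit of $(9/8n)^{1/4}M_n^\bullet$ is also a distributional limit of $(9/8n)^{1/4}M_n$. No reweighting argument, no uniform integrability.

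\textbf{The gap in your version.} Your reweighting argument requires uniform integrability of $(W_n^\bullet)^{-1}$, and you try to obtain it from ``tightness of $W_n^\bullet$ away from $0$''. That inference is false: tightness away from $0$ (for every $\varepsilon>0$ there is $\delta>0$ with $P(W_n^\bullet<\delta)<\varepsilon$) controls the probability of $W_n^\bullet$ being small, not the contribution $\mathbb{E}[(W_n^\bullet)^{-1}\mathbf{1}_{\{W_n^\bullet<\delta\}}]$, which is what UI requires. Since $|V(\mathbf{m})|$ can be as small as $1$ for a map with $n$ edges (all loops at a single vertex), one only has the trivial bound $(W_n^\bullet)^{-1}\le n$, which is useless. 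Concentration in probability of $W_n^\bullet$ to $1/2$ does not by itself bound $\mathbb{E}[(W_n^\bullet)^{-1}]$. To make your route rigorous you would need a genuinely new quantitative tail estimate on $P(\#V(M_n^\bullet)\le\delta n)$ — work that Proposition~\ref{TV}'s TV formulation makes unnecessary. The moral: the paper's formulation of the de-pointing lemma as a TV bound is not just cosmetic — it is exactly what lets the corollary drop out with no further analytic input.
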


As was pointed to us by {\'E}ric Fusy, it is likely that our methods
can also be used to prove convergence of uniform (pointed) bipartite
maps with $n$ edges. Indeed, following {\cite{BoFuGu13}}, these are in
natural correspondence \textit{via} the $\tmop{AB}$ bijection with
pointed quadrangulations with no confluent faces (see below for
definitions). In turn, the latter are in correspondence \textit{via}
the $\tmop{CVS}$ bijection with ``very well-labeled trees,'' which are
elements of $\mathbb{T}_{n}$ in which the labels of two neighboring
vertices differ by exactly $1$ in absolute value (this has the effect
of replacing the scaling constant $( 9/8 )^{1/4}$ with
$2^{-1/4}$). However, checking the details of this approach still
requires some work, and we did not pursue this to keep the length of
this paper short, and because this result has already been obtained by
Abraham {\cite{abr14}} using a more ``traditional'' and robust
bijective method.

In Section \ref{sec:CVSAB}, we present the two abovementionned
bijections.  Section \ref{secpoint} is devoted to the comparison
between the distributions of $M_{n}$ and $M_{n}^{\bullet} $. Section
\ref{secenc} is dedicated to the heart of the proof of Theorem
\ref{sec:main-results}, and Section \ref{reroot} proves the key
re-rooting identity~($\ref{ing2}$).

\paragraph*{Acknowledgments} We thank J{\'e}r{\'e}mie Bouttier and {\'E}ric Fusy for
stimulating discussions and insights during the elaboration of this work.

\section{Cori--Vauquelin--Schaeffer and Ambj{\o}rn--Budd
bijections}\label{sec:CVSAB}

In most of this section, we fix an element $( \tmmathbf{q},v_{\ast} ) \in
\mathcal{Q}_{n}^{\bullet}$, and consider one particular embedding of
$\tmmathbf{q}$ in the plane. We label the elements of $V ( \tmmathbf{q} )$ by
their distance to $v_{\ast}$, hence letting $\tmmathbf{l}_{+} ( v )
=d_{\tmmathbf{q}} ( v,v_{\ast} )$. Using the bipartite nature of
quadrangulations, each quadrangular face is of either one of two types, which
are called \emph{simple} and \emph{confluent}, depending on the
pattern of labels of the incident vertices. This is illustrated on
Figure~\ref{fig:convbij}, where the four edges incident to a face of
$\tmmathbf{q}$ are represented in thin black lines, and the four corresponding
vertices are indicated together with their respective labels. The
Cori--Vauquelin--Schaeffer ($\tmop{CVS}$) bijection consists in adding one extra
``red'' arc inside each face, linking the vertex with largest label of simple
faces to the next one in the face in clockwise order, and the two vertices
with larger label in confluent faces. The Ambj{\o}rn--Budd ($\tmop{AB}$) bijection
adopts the opposite rules, adding the ``green'' arcs to $\tmmathbf{q}$. \

\begin{figure}[ht]
  \centering\includegraphics{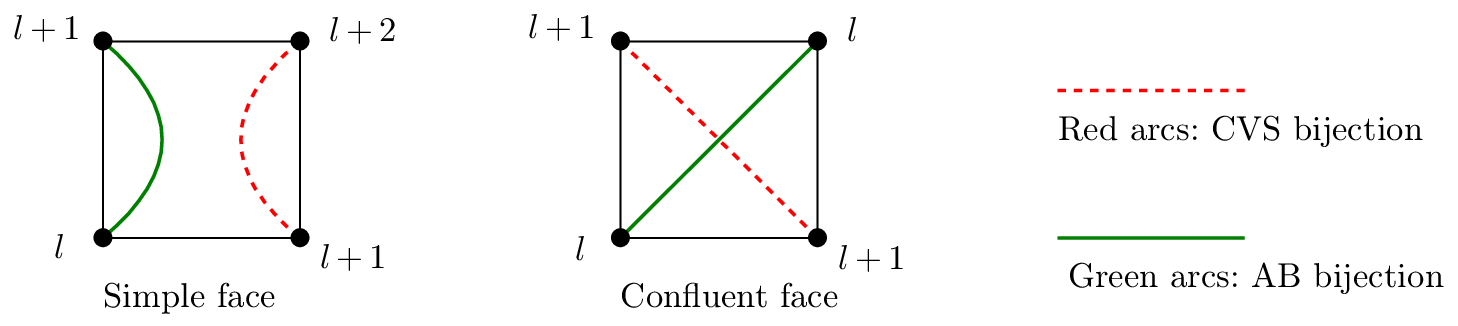}
  \caption{Convention for adding arcs in the bijections.}
	\label{fig:convbij}
\end{figure}

The connected graphs whose edge-sets are formed by the arcs of either color
(red or green) are obviously embedded graphs.

\paragraph{The Cori--Vauquelin--Schaeffer bijection}

For the $\tmop{CVS}$ bijection, the ``red'' embedded graph is a plane tree
$\tmmathbf{t}$ with $n$ edges, with vertex set $V ( \tmmathbf{t} ) =V (
\tmmathbf{q} ) \setminus \{ v_{\ast} \}$. This tree also inherits a label
function, which is simply the label function $\tmmathbf{l}_{+}$ above. It also
inherits a root from the root $e_{\ast}$ of $\tmmathbf{q}$, following a
convention that we will not need to describe in detail. What is important
about the rooting convention, however, is the following. If we are given a
vertex~$v$ and an oriented edge~$e$ in~$\tmmathbf{q}$, we say that $e$
{\emph{points towards}} $v$ if $d_{\tmmathbf{q}} ( e^{+} ,v ) =
d_{\tmmathbf{q}} ( e^{-} ,v ) -1$. Then the root vertex of $\tmmathbf{t}$ is
equal to $e_{\ast}^{-}$ if $e_{\ast}$ points towards $v_{\ast}$, and to
$e_{\ast}^{+}$ otherwise. We let $\epsilon$ be respectively equal to $0$ or
$1$ depending on which of these two situations occur.

\begin{remark}
  Throughout this work, we will never consider the root (edge) of the tree;
  only its {\emph{root vertex}}, that is, the origin of its root, will be of
  importance.
\end{remark}

It is then usual to define the label function $\tmmathbf{l} ( v )
=\tmmathbf{l}_{+} ( v ) -\tmmathbf{l}_{+} ( \tmop{root} ( \tmmathbf{t} ) )
=\tmmathbf{l}_{+} ( v ) -\tmmathbf{l}_{+} ( e^{-}_{\ast} ) - \epsilon$, with
values in $\mathbb{Z}$.

\begin{proposition}
  \label{CVS}The mapping $\tmop{CVS} :\mathcal{Q}_{n}^{\bullet} \rightarrow
  \mathbb{T}_{n} \times \{ 0,1 \}$ sending the pointed quadrangulation $(
  \tmmathbf{q},v_{\ast} )$ to the pair $( ( \tmmathbf{t},\tmmathbf{l} ) ,
  \epsilon )$ as above, is a bijection. 
\end{proposition}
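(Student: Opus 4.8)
The plan is to establish that $\tmop{CVS}$ is a bijection by exhibiting an explicit inverse map and checking that the two compositions are the identity. This is the classical Cori--Vauquelin--Schaeffer argument, so the main work is organizational; I would structure it as follows.

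\medskip

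\noindent\textbf{Step 1: the forward map is well-defined.} First I would verify that, starting from $(\tmmathbf{q},v_\ast)\in\mathcal{Q}_n^\bullet$, the ``red'' embedded graph $\tmmathbf{t}$ described above is indeed a plane tree with $n$ edges and vertex set $V(\tmmathbf{q})\setminus\{v_\ast\}$. Since $\tmmathbf{q}$ has $n$ faces and each contributes exactly one red arc, $\tmmathbf{t}$ has $n$ edges; since $\tmmathbf{q}$ has $n+2$ vertices (by Euler's formula, as a quadrangulation with $n$ faces has $2n$ edges hence $n+2$ vertices), a connected graph on $n+1$ vertices with $n$ edges is a tree, so it suffices to check connectedness and acyclicity, together with the fact that $v_\ast$ is isolated in the red graph and every other vertex is incident to at least one red arc. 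Connectedness and the absence of cycles is the geometric crux: one argues that a red cycle would have to enclose $v_\ast$ on one side, but following the arc-adding rule the label (distance to $v_\ast$) is constant or varies in a controlled way along red arcs, and a shortest path from an enclosed vertex to $v_\ast$ would have to cross a red arc, contradicting the rule. Then I would check that the label function $\tmmathbf{l}=\tmmathbf{l}_+-\tmmathbf{l}_+(\tmop{root}(\tmmathbf{t}))$ makes $(\tmmathbf{t},\tmmathbf{l})$ a well-labeled tree: $\tmmathbf{l}$ is $0$ at the root vertex by construction, and for neighbors $u,v$ in $\tmmathbf{t}$ one has $|\tmmathbf{l}_+(u)-\tmmathbf{l}_+(v)|\le 1$ because $u,v$ are two vertices of a common quadrangular face joined by a red arc, and in each of the simple/confluent cases the relevant pair of labels differs by at most $1$ (in fact by exactly $\pm1$ in the simple case and $0$ in the confluent case). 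Hence $\tmop{CVS}(\tmmathbf{q},v_\ast)=((\tmmathbf{t},\tmmathbf{l}),\epsilon)\in\mathbb{T}_n\times\{0,1\}$.

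\medskip

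\noindent\textbf{Step 2: construct the inverse.} Given $((\tmmathbf{t},\tmmathbf{l}),\epsilon)\in\mathbb{T}_n\times\{0,1\}$, I would recover $(\tmmathbf{q},v_\ast)$ by the standard Schaeffer construction: embed $\tmmathbf{t}$ in the plane, add one extra vertex $v_\ast$ in the face of $\tmmathbf{t}$, shift labels by setting $\tmmathbf{l}_+=\tmmathbf{l}-\min\tmmathbf{l}+1$ so that $\min\tmmathbf{l}_+=1$ and extend $\tmmathbf{l}_+(v_\ast)=0$, and then, following the contour of $\tmmathbf{t}$, join each corner with label $\ell$ to its successor, defined as the next corner in contour order with label $\ell-1$ (using $v_\ast$ for the corners of label $1$). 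Deleting the edges of $\tmmathbf{t}$ leaves a graph $\tmmathbf{q}$; one must check it is a quadrangulation with $n$ faces, that $\tmmathbf{l}_+(v)=d_{\tmmathbf{q}}(v,v_\ast)$ for all $v$, and that the faces are simple or confluent according to the label pattern so that applying the red-arc rule to $(\tmmathbf{q},v_\ast)$ gives back $\tmmathbf{t}$. The root of $\tmmathbf{q}$ is reconstructed from the root vertex of $\tmmathbf{t}$ and $\epsilon$ by reversing the rooting convention alluded to in the text (the value of $\epsilon$ encodes exactly the binary choice lost when passing from the rooted, oriented edge of $\tmmathbf{q}$ to the root vertex of $\tmmathbf{t}$). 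These properties are verified by the usual successor-chain and contour arguments.

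\medskip

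\noindent\textbf{Step 3: the two maps are mutually inverse.} Finally I would check that the forward and backward constructions compose to the identity on both sides. In one direction, the successor arcs built from $(\tmmathbf{t},\tmmathbf{l})$ coincide face-by-face with the red arcs of the original $\tmmathbf{q}$ because the label pattern around each face of $\tmmathbf{q}$ determines its type and the unique red arc, and conversely each face of the reconstructed $\tmmathbf{q}$ carries exactly one original tree edge. In the other direction, the tree contour controls everything. The identity $\tmmathbf{l}_+ = d_{\tmmathbf q}(\cdot, v_\ast)$ is the part that ties the label shift to the original definition $\tmmathbf{l}_+(v)=d_{\tmmathbf{q}}(v,v_\ast)$, and keeping track of $\epsilon$ and the root-vertex convention makes the correspondence a genuine bijection on $\mathbb{T}_n\times\{0,1\}$ rather than on unrooted objects.

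\medskip

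\noindent\textbf{Main obstacle.} The step I expect to be most delicate, and which I would want to present carefully, is the proof that in the inverse construction $\tmmathbf{l}_+$ genuinely equals the graph distance to $v_\ast$ in the reconstructed $\tmmathbf{q}$ (the inequality $d_{\tmmathbf q}(v,v_\ast)\le \tmmathbf{l}_+(v)$ is easy by following a successor chain, but the reverse inequality requires showing no shortcut exists, typically via the ``confluent/simple'' structure of faces and a parity/labelling argument). Everything else is bookkeeping with plane-tree contours and Euler's formula. Since the $\tmop{CVS}$ bijection is classical, in the actual writeup I would likely state Steps 1--3 and refer to \cite{CoVa,schaeffer98,CSise} for the detailed verifications, emphasizing only the rooting-convention/$\epsilon$ bookkeeping since that is the part tailored to the present setup and needed in parallel with the $\tmop{AB}$ bijection.
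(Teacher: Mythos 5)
The paper gives no proof of Proposition~\ref{CVS}: like the parallel Proposition~\ref{AB}, it is stated without argument and treated as the classical Cori--Vauquelin--Schaeffer result, with the relevant references (\cite{CoVa,schaeffer98}) given in the introduction. Your sketch is a correct outline of that classical proof, and since you propose in the end to defer the detailed verifications to \cite{CoVa,schaeffer98,CSise} and only spell out the $\epsilon$/rooting bookkeeping, your writeup would in effect coincide with what the paper does. A few remarks on the sketch itself, for completeness. In Step~1 the tree claim is the genuine content: a quadrangulation with $n$ faces has $2n$ edges and $n+2$ vertices, so the red graph, having one arc per face and spanning $V(\tmmathbf{q})\setminus\{v_\ast\}$ (note $v_\ast$, the unique vertex of label $0$, is never an endpoint of a red arc), lives on $n+1$ vertices with $n$ arcs; it then suffices to show acyclicity or connectedness, but your one-line ``enclosed $v_\ast$ and cross a red arc'' argument is vaguer than the standard one (which runs via the minimum label on a putative cycle and the structure of simple/confluent faces around it), so this is the spot where the deferral to the references is really doing work rather than being a convenience. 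Your observation that the label difference across a red arc is exactly $\pm1$ in a simple face and $0$ in a confluent face is correct and gives the $|\tmmathbf{l}(u)-\tmmathbf{l}(v)|\leqslant 1$ condition defining $\mathbb{T}_n$. In Step~2 the inverse you describe is precisely the construction the paper recalls later in Section~\ref{secenc} (corners, successors $s(c)$, arcs $e_i$), and your identification of $\tmmathbf{l}_+ = d_{\tmmathbf{q}}(\cdot,v_\ast)$ as the delicate inequality is the right diagnosis. So: no divergence from the paper, and a correct sketch, with the acyclicity step being the one place you would need to say more if not citing.
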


In the following, we will often omit $\epsilon$ from the notation, and will
only refer to it when it plays an indispensable role.

\paragraph{The Ambj{\o}rn--Budd bijection}

On the other hand, the ``green'' embedded graph formed following the rules of
the $\tmop{AB}$ bijection is a plane map $\tmmathbf{m}$ with $n$ edges, but with
vertex-set equal to $V ( \tmmathbf{q} ) \setminus V_{\max} ( \tmmathbf{q} )$,
where $V_{\max} ( \tmmathbf{q} )$ is the set of vertices $v$ of $\tmmathbf{q}$
that are local maxima of the function $\tmmathbf{l}_{+}$, i.e.,~such that
$d_{\tmmathbf{q}} ( u,v_{\ast} ) =d_{\tmmathbf{q}} ( v,v_{\ast} ) -1$ for
every neighbor $u$ of $v$. Note that $V_{\max} ( \tmmathbf{q} )$ really
depends on the pointed map $( \tmmathbf{q},v_{\ast} )$ rather than on
$\tmmathbf{q}$ alone, but we nevertheless adopt this shorthand notation for
convenience. One should note that the distinguished vertex $v_{\ast} \in V (
\tmmathbf{q} )$ is never a local maximum of $\tmmathbf{l}$ (it is indeed the
global minimum!), so that it is an element of $V ( \tmmathbf{m} )$, also
naturally distinguished.

By the Euler formula, this implies that $\tmmathbf{m}$ has $\#V_{\max} (
\tmmathbf{q} )$ faces. One can be more precise by saying that when embedding
$\tmmathbf{m}$ and $\tmmathbf{q}$ jointly in the plane as in the above
construction, each face of $\tmmathbf{m}$ contains exactly one of the vertices
of $V_{\max} ( \tmmathbf{q} )$. Finally, we can use the root $e_{\ast}$ of
$\tmmathbf{q}$ to root the map $\tmmathbf{m}$ according to some convention
that we will not describe fully, but for which the root vertex of
$\tmmathbf{m}$ is equal to $e_{\ast}^{+}$ if $e_{\ast}$ points towards
$v_{\ast}$, and to $e_{\ast}^{-}$ otherwise. We let $\epsilon$ be equal to 0
or 1 accordingly. See Figure~\ref{fig:notation} for an example of both
bijections.

\begin{proposition}
  \label{AB}The mapping $\tmop{AB} :\mathcal{Q}_{n}^{\bullet} \rightarrow
  \mathcal{M}_{n}^{\bullet} \times \{ 0,1 \}$ sending the pointed
  quadrangulation $( \tmmathbf{q},v_{\ast} )$ to the pair $( (
  \tmmathbf{m},v_{\ast} ) , \epsilon )$ as above, is a bijection. 
\end{proposition}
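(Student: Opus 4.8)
The plan is to establish that $\tmop{AB}$ is a bijection by exhibiting an explicit inverse map, built exactly in parallel with the standard argument for the $\tmop{CVS}$ bijection (Proposition \ref{CVS}), and to exploit the duality between the two constructions. First I would check that $\tmop{AB}$ is well-defined: given $(\tmmathbf{q},v_\ast)\in\mathcal{Q}_n^\bullet$, one verifies that the ``green'' graph is connected, that it is cellularly embedded, and that its vertex set is precisely $V(\tmmathbf{q})\setminus V_{\max}(\tmmathbf{q})$. Connectedness follows because every vertex $v\notin V_{\max}(\tmmathbf{q})$ has a neighbour with strictly larger label and hence lies on a green arc, and following green arcs toward lower labels one eventually reaches $v_\ast$; the local planarity of the construction (one arc per face, non-crossing) gives the cellular embedding. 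An Euler-characteristic count then shows $\tmmathbf{m}$ has $n$ edges and $\#V_{\max}(\tmmathbf{q})$ faces, with exactly one local-max vertex in each face, as asserted in the text preceding the statement.

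Next I would construct the inverse. Given $((\tmmathbf{m},v_\ast),\epsilon)\in\mathcal{M}_n^\bullet\times\{0,1\}$, the idea is to recover the labels $\tmmathbf{l}_+$ on $V(\tmmathbf{m})$ as graph distances to $v_\ast$ \emph{in $\tmmathbf{q}$}, then reconstruct $\tmmathbf{q}$ face by face. Concretely: embed $\tmmathbf{m}$, and for each corner of each face of $\tmmathbf{m}$ apply a local ``successor'' rule dual to Schaeffer's — inside each face one adds a new vertex (a future element of $V_{\max}(\tmmathbf{q})$) together with edges to appropriate corners, in such a way that the green arcs of $\tmmathbf{m}$ are exactly the diagonals selected by the $\tmop{AB}$ rule of Figure~\ref{fig:convbij}. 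The key point is that the $\tmop{AB}$ rule, read backwards, determines uniquely where the added vertex and its incident edges must go in each face; the parameter $\epsilon$ together with the root of $\tmmathbf{m}$ pins down the root of $\tmmathbf{q}$ via the stated convention (root vertex of $\tmmathbf{m}$ is $e_\ast^+$ or $e_\ast^-$ according to whether $e_\ast$ points towards $v_\ast$). One then checks that the two composites $\tmop{AB}^{-1}\circ\tmop{AB}$ and $\tmop{AB}\circ\tmop{AB}^{-1}$ are the identity, which amounts to verifying that the label function produced by the reconstruction genuinely equals $d_{\tmmathbf{q}}(\cdot,v_\ast)$ — this is where one uses that along any green arc the labels differ by $1$, so that $\tmmathbf{l}_+$ is $1$-Lipschitz for $d_{\tmmathbf{m}}$, and conversely that every geodesic toward $v_\ast$ in $\tmmathbf{q}$ can be taken to follow green arcs, giving the reverse inequality.

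I expect the main obstacle to be the bookkeeping needed to check that the reconstruction is \emph{consistent globally}, not just face by face: when one adds a vertex inside each face of $\tmmathbf{m}$ and reconnects corners, one must verify that the resulting object is a genuine quadrangulation — every new face has degree exactly $4$ — and that the bipartition $V_0\sqcup V_1$ comes out correctly with $V_{\max}(\tmmathbf{q})$ being precisely the added vertices. This is a purely combinatorial verification, of the same flavour as the classical proof that Schaeffer's construction inverts $\tmop{CVS}$, and it can be organized by a careful case analysis on the two face types (simple versus confluent) together with the label-monotonicity along green arcs. Since the $\tmop{AB}$ construction is, by design, the ``$\tmop{CVS}$ construction with the opposite diagonal choice,'' one can largely transport the known proof of Proposition \ref{CVS}, replacing the word ``tree'' by ``map with one local maximum per face,'' and the only genuinely new input is the identification of $V_{\max}(\tmmathbf{q})$ with the faces of $\tmmathbf{m}$ via the Euler formula, which was already indicated above.
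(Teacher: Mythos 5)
The paper does not actually prove Proposition~\ref{AB}: it is stated and attributed to Ambj{\o}rn and Budd's original paper {\cite{ambjornbudd}}, so there is no in-paper proof against which to compare your argument. Your general strategy — exhibit an explicit inverse in parallel with the Schaeffer inverse for the $\tmop{CVS}$ bijection — is the standard and correct route, but as written there is a genuine circularity at its core. You propose to start the inverse construction by ``recover[ing] the labels $\tmmathbf{l}_+$ on $V(\tmmathbf{m})$ as graph distances to $v_\ast$ \emph{in $\tmmathbf{q}$},'' but $\tmmathbf{q}$ is precisely the object you are trying to build, so this cannot serve as a definition. The only available definition is $\tmmathbf{l}_+(v) := d_{\tmmathbf{m}}(v,v_\ast)$, and the fact that this agrees with $d_{\tmmathbf{q}}(v,v_\ast)$ once $\tmmathbf{q}$ is recovered is exactly the content of the identity~\eqref{eq:samegeod}, which is a non-trivial theorem in its own right (the paper records it but does not prove it either) and must be established rather than assumed. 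Your later remark about label monotonicity along green arcs hints that you are aware of this, but it is not made into an argument.

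Two further points. First, the assertion that ``along any green arc the labels differ by $1$'' is not quite right: in a confluent face the green arc joins the two vertices of \emph{equal} (smaller) label, so labels differ by $0$ or $1$; the $1$-Lipschitz conclusion for $\tmmathbf{l}_+$ with respect to $d_{\tmmathbf{m}}$ survives, but the stated reason does not. Second, the actual local reconstruction rule — where the new vertex of $V_{\max}(\tmmathbf{q})$ goes inside each face of $\tmmathbf{m}$ and which corners receive an incident quadrangulation edge — is the substantive combinatorial heart of the inverse, and you flag it (``a local `successor' rule dual to Schaeffer's'') without specifying it. Without that rule and the check that it produces a quadrangulation whose $\tmop{AB}$ image is $\tmmathbf{m}$, the argument remains a plan rather than a proof.
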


Again, we will usually omit $\epsilon$ from the notation. The map
$\tmmathbf{m}$ also inherits the labeling function~$\tmmathbf{l}_{+}$ from the
quadrangulation $\tmmathbf{q}$, but contrary to what happens for the $\tmop{CVS}$
bijection, this information turns out to be redundant thanks to the remarkable
identity
\begin{equation}
  d_{\tmmathbf{m}} ( v,v_{\ast} ) =d_{\tmmathbf{q}} ( v,v_{\ast} )
  =\tmmathbf{l}_{+} ( v ) , \qquad v \in V ( \tmmathbf{m} ) =V (
  \tmmathbf{q} ) \setminus V_{\max} ( \tmmathbf{q} ) . \label{eq:samegeod}
\end{equation}
In fact, we are going to make this identity slightly more precise by showing
that $\tmmathbf{q}$ and $\tmmathbf{m}$ actually ``share'' some specific
geodesics to $v_{\ast}$. In order to specify the exact meaning of this, we
need a couple extra definitions. Let $e$ be an oriented edge in
$\tmmathbf{q}$, and let $f$ be the face incident to~$e$. We say that $e$ is
\emph{special} if the green arc associated with $f$ by the $\tmop{AB}$ bijection is
incident to the same two vertices as $e$ (in particular, $f$ must be a simple
face). In this case, we let $\tilde{e}$ be this green arc. On the above
picture of a simple face, the face is incident to exactly one special edge,
which is the one on the left, oriented from top to bottom. More generally, we
use the following definition:

\begin{definition}
  \label{running}If $c= ( e_{1} ,e_{2} , \ldots ,e_{k} )$ is a chain of
  oriented edges in $\tmmathbf{q}$, in the sense that $e_{i}^{+} =e_{i+1}^{-}$
  for every $i \in \{ 1,2 \ldots ,k-1 \}$, and if all these oriented edges are
  special, then we say that the chain $c$ is special and we let $\tilde{c} = (
  \tilde{e}_{1} , \ldots , \tilde{e}_{k} )$ be the corresponding chain in
  $\tmmathbf{m}$. 
\end{definition}

Next, if $e$ is an edge of $\tmmathbf{q}$, we can canonically give it an
orientation so that it points towards~$v_{\ast}$. Then, among all geodesic
chains $( e,e_{1,} \ldots ,e_{k} )$ from $e^{-}$ to $v_{\ast}$ with first step
$e$ (so that $k=d_{\tmmathbf{q}} ( e^{-} ,v_{\ast} ) -1$), there is a
distinguished one, called the \emph{left-most geodesic to $v_{\ast}$ with
first step $e$}, which is the one for which the clockwise angular sector
between $e_{i}$ and $e_{i+1}$, and excluding $e_{i+1}$, contains only edges
pointing towards $e_{i}^{+} =e_{i-1}^{-}$, with the convention that $e_{0}
=e$. We let $\gamma ( e )$ be this distinguished geodesic, and $\hat{\gamma} (
e ) = ( e_{1} ,e_{2} , \ldots ,e_{k} )$ be the same path, with the first step
removed. This is illustrated in the following picture, where two corresponding
steps of the geodesic $\gamma ( e )$ are depicted.

\begin{figure}[ht]
  \centering\includegraphics{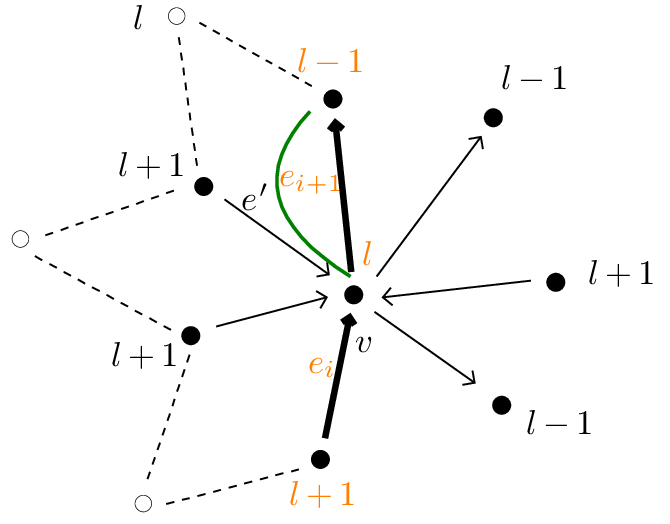}
  \caption{Two consecutive steps of a left-most geodesic.}
	\label{fig:lmg}
\end{figure}

\begin{proposition}
  \label{special}Let $e$ be an oriented edge of $\tmmathbf{q}$ that points
  toward $v_{\ast}$. Then the chain $\hat{\gamma} ( e )$ is special. 
\end{proposition}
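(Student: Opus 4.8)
\textbf{Proof plan for Proposition \ref{special}.}
The plan is to proceed by induction on $k=d_{\tmmathbf{q}}(e^-,v_\ast)-1$, the number of steps of $\hat\gamma(e)$. If $k=0$ there is nothing to prove, so assume $k\geq 1$ and write $\gamma(e)=(e_0,e_1,\ldots,e_k)$ with $e_0=e$. By definition of the left-most geodesic, the induction will follow once I show that the first step $e_1$ of $\hat\gamma(e)$ is a special edge and that $\hat\gamma(e)=(e_1)\cdot\hat\gamma(e_1)$, i.e.\ that removing the first step of a left-most geodesic again yields a left-most geodesic (this compatibility is essentially built into the definition, since the ``only edges pointing towards $e_i^+$'' condition in the angular sectors is local and unchanged by dropping $e_0$). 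So the crux is the single statement: \emph{if $e$ points towards $v_\ast$ and $e_1$ is the first step of the left-most geodesic with first step $e$, then $e_1$ is special.}

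To prove this, I would look at the face $f$ incident to $e_1$ (the face to the left of $e_1$), examine the four vertices of $f$ with their $\tmmathbf{l}_+$-labels, and use the left-most rule to pin down which of the two local configurations (simple or confluent) $f$ is, and where $e$ sits relative to $f$. The key geometric input is that $e$ lies in the clockwise angular sector at $e_1^-=e^+$ between $\tmop{rev}(e_1)$ and... — more precisely, the left-most condition at the vertex $e_1^-$ forces $e$ to be, going clockwise from $e_1$, the last edge before $e_1$ that points towards $e_1^-$; combined with the fact that $e$ points towards $v_\ast$ and $d_{\tmmathbf{q}}(e^-,v_\ast)=d_{\tmmathbf{q}}(e_1^-,v_\ast)$... wait — I need to be careful: $e$ points towards $v_\ast$ means $d_{\tmmathbf{q}}(e^+,v_\ast)=d_{\tmmathbf{q}}(e^-,v_\ast)-1$, and $e^+=e_1^-$, and $e_1$ also points towards $v_\ast$. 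I would then unfold the definition of the $\tmop{AB}$ green arc in the face $f$: comparing the labels around $f$, the green arc joins the two vertices of $f$ that are \emph{not} the ones joined by the red $\tmop{CVS}$ arc, i.e.\ (for a simple face) it joins the two vertices with smaller label / or in the confluent case the two with smaller label — and I must check this coincides with the endpoints $e_1^-,e_1^+$ of $e_1$. Essentially one verifies, face-type by face-type, that the edge of a quadrangular face $f$ that is traversed by a left-most geodesic entering with an edge that points towards $v_\ast$ is always the ``special'' edge of $f$ as identified in the paragraph before Definition \ref{running} (the left edge of a simple face oriented from its higher-labelled endpoint to the lower one). This is a finite case analysis over the local pictures in Figure \ref{fig:convbij}.

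Concretely the steps are: (1) set up the induction and reduce to showing the first step $e_1$ of $\gamma(e)$ is special; (2) recall the label pattern of the face $f$ incident to $e_1$ and the rule defining the green arc $\tilde f$ there; (3) use that $e_1$ points towards $v_\ast$ (all steps of a geodesic to $v_\ast$ do, by the triangle inequality) together with the left-most angular-sector condition relating the incoming edge $e_i$ and outgoing edge $e_{i+1}$ at each internal vertex, to determine which corner of $f$ the geodesic ``turns around'' and hence that $f$ must be a simple face with $e_1$ its left edge oriented downward in label; (4) conclude $e_1$ is special, so $\tilde e_1$ is well-defined, and apply the induction hypothesis to $\gamma(e_1)=(e_1)\cdot\hat\gamma(e_1)$, whose un-hatted tail $\hat\gamma(e_1)=\hat\gamma(e)\setminus\{e_1\}$ is special by induction, giving that $\hat\gamma(e)=(e_1,e_2,\ldots,e_k)$ is special as a chain.

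The main obstacle is step (3): making the ``left-most'' angular-sector condition interact correctly with the label structure and the arc-drawing conventions of Figure \ref{fig:convbij}. One has to be scrupulous about clockwise vs.\ counter-clockwise conventions, about the distinction between an edge of $\tmmathbf{q}$ and its two orientations, and about the fact that a single face $f$ may be visited by the geodesic at either of its two ``low'' corners — so I expect to need to rule out the confluent case and the ``wrong edge of a simple face'' case explicitly, each time exhibiting an edge in the relevant angular sector that points the wrong way, contradicting left-minimality. Everything else — the induction skeleton, the reduction, and the fact that truncating a left-most geodesic leaves a left-most geodesic — is routine.
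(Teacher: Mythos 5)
Your plan coincides with the paper's proof in structure and in the key identification: examine the face $f$ to the left of the next step $e_{i+1}$ and use the left-most sector condition at $v=e_i^+=e_{i+1}^-$ to determine its label pattern. The paper resolves your step (3) with one clean observation rather than a case analysis: let $e'$ be the last edge strictly before $e_{i+1}$ in clockwise order around $v$, so that $e'$ and $e_{i+1}$ bound a corner of $f$; the left-most condition forces $e'$ to point towards $v$, so the labels of $f$ at $(e')^-$, $v$, $e_{i+1}^+$ are $l+1$, $l$, $l-1$ respectively, and the fourth vertex must therefore carry label $l$, making $f$ a simple face with $e_{i+1}$ as its special edge. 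The truncation-invariance of left-most geodesics you invoke is true but unnecessary: since each $e_i$ with $i\geq 1$ automatically points towards $v_\ast$ (being a step of a geodesic to $v_\ast$), and $e_0=e$ does so by hypothesis, the argument applies uniformly for every $i\in\{0,\ldots,k-1\}$ without setting up an induction.
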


\begin{proof}
  Here the reader might want to use Figure~\ref{fig:lmg} to follow the details
  of this proof. Fix $i \in \{ 0,1, \ldots ,k-1 \}$ and let $v=e_{i}^{+}
  =e_{i+1}^{-}$.
  
  Consider the last edge $e'$ before $e_{i+1}$ in clockwise order around $v$.
  Then by definition of the left-most geodesic, $e'$ must be pointing towards
  $v$. Then the face incident to $v$ that has the sector between $e'$ and
  $e_{i+1}$ as a corner is necessarily a simple face, and the vertex of this
  face that is diagonally opposed to $v$ must have label equal to the label
  $l=d_{\tmmathbf{q}} ( v,v_{\ast} )$ of $v$ (since the other two labels must
  be $l+1=d_{\tmmathbf{q}} ( (  e' )^{-} ,v_{\ast}$) and
  $l-1=d_{\tmmathbf{q}} ( e_{i+1}^{+} ,v )$. Therefore, $e_{i+1}$ is the
  special edge incident to this simple face.
  
  Since by hypothesis $e=e_{0}$ is pointing towards $v_{\ast}$, this implies
  by our argument that $e_{1}$ is special, and we can conclude by an induction
  argument. 
\end{proof}

Proposition \ref{special} has an apparently anecdotal consequence on which, in
fact, most of this work relies. Let $e$, $e'$ be two oriented edges of
$\tmmathbf{q}$ pointing towards $v_{\ast}$. The two left-most geodesics
$\gamma ( e ) = ( e_{0} ,e_{1} , \ldots ,e_{k} )$ and $\gamma ( e' ) = (
e'_{0} ,e'_{1} , \ldots ,e'_{k'} )$ share a maximal common suffix, say
$e_{k-r+1} =e'_{k' -r+1} , \ldots ,e_{k} =e'_{k'}$ where $r \geqslant 0$ is
the largest possible. (Note that $r$ may be equal to $k+1$ or $k' +1$, in the
case where one geodesic is entirely a suffix of the other one.) But then it
always holds that $e_{k-r}^{+} = ( e'_{k' -r} )^{+}$, so that the sequence $(
e_{0} ,e_{1} , \ldots ,e_{k-r} , \tmop{rev} ( e'_{k' -r} )  
 , \tmop{rev} ( e'_{k' -r-1} ) , \ldots , \tmop{rev} ( e'_{1} ) ,
\tmop{rev} ( e'_{0} ) )$ is a chain, with total length that we denote by
$d^{\circ}_{\tmmathbf{q}} ( e,e' )$. Recall that $\Delta ( \tmmathbf{m} )$
denotes the largest face degree of~$\tmmathbf{m}$.

\begin{corollary}
  \label{majoration}Let $v$, $v' \in V ( \tmmathbf{m} ) =V ( \tmmathbf{q} )
  \setminus V_{\max} ( \tmmathbf{q} )$ be given, and let $e$, $e'$ be two
  oriented edges in $\tmmathbf{q}$ both pointing towards $v_{\ast}$, and such
  that $e^{-} =v  $ and $( e' )^{-} =v'$. Then it holds that
  \[ d_{\tmmathbf{m}} ( v,v' ) \leqslant d^{\circ}_{\tmmathbf{q}} ( e,e' ) +
     \Delta ( \tmmathbf{m} ) . \]
\end{corollary}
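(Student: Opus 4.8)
The plan is to transfer the chain constructed just before the statement from $\tmmathbf{q}$ into $\tmmathbf{m}$ using Proposition~\ref{special}, and then to control the discrepancy caused by the finitely many ``non-special'' steps. Concretely, write $\gamma(e) = (e_0, e_1, \ldots, e_k)$ and $\gamma(e') = (e'_0, e'_1, \ldots, e'_{k'})$, let $r \geqslant 0$ be the length of their maximal common suffix, and recall that $e_{k-r}^+ = (e'_{k'-r})^+ =: w$. By Proposition~\ref{special}, the chains $\hat\gamma(e) = (e_1, \ldots, e_k)$ and $\hat\gamma(e') = (e'_1, \ldots, e'_{k'})$ are both special, hence so are all of their sub-chains. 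In particular $(e_1, \ldots, e_{k-r})$ is special (a prefix of the special chain $\hat\gamma(e)$) and $(e'_1, \ldots, e'_{k'-r})$ is special, so by Definition~\ref{running} each gives rise to a genuine chain in $\tmmathbf{m}$, namely $(\tilde e_1, \ldots, \tilde e_{k-r})$ and $(\tilde e'_1, \ldots, \tilde e'_{k'-r})$. The first is a path in $\tmmathbf{m}$ from $e_1^- = e_0^+$ to $e_{k-r}^+ = w$; the second, read in reverse, is a path in $\tmmathbf{m}$ from $w$ to $(e'_1)^- = (e'_0)^+$.

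Next I would splice these two $\tmmathbf{m}$-paths together at $w$ to obtain a walk in $\tmmathbf{m}$ from $e_0^+ = e^+$ to $(e'_0)^+ = (e')^+$, of total length $(k-r) + (k'-r)$, which equals $d^\circ_{\tmmathbf{q}}(e,e') - 2$ since the $\tmmathbf{q}$-chain displayed before the statement has length $k + k' - 2r + 2$ (its steps are $e_0, \ldots, e_{k-r}$, that is $k-r+1$ edges, followed by $\mathrm{rev}(e'_{k'-r}), \ldots, \mathrm{rev}(e'_0)$, that is $k'-r+1$ edges). So far this bounds $d_{\tmmathbf{m}}(e^+, (e')^+)$, but the statement is about $d_{\tmmathbf{m}}(v, v') = d_{\tmmathbf{m}}(e^-, (e')^-)$. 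The remaining task is therefore to connect $v = e^-$ to $e^+$ and $v' = (e')^-$ to $(e')^+$ inside $\tmmathbf{m}$ by short paths. Here I would use the joint embedding of $\tmmathbf{q}$ and $\tmmathbf{m}$: the endpoints $v$ and $e^+$ of the edge $e$ of $\tmmathbf{q}$ lie on the boundary of some face $f$ of $\tmmathbf{m}$ (the green arcs surround the $V_{\max}$ vertex sitting in $f$), and going around $\partial f$ connects any two of its incident $\tmmathbf{m}$-vertices in at most $\Delta(\tmmathbf{m})/2$ steps — actually a cruder bound of $\Delta(\tmmathbf{m})$ steps suffices for the two detours combined, which is exactly the slack allowed in the statement. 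Thus $d_{\tmmathbf{m}}(v, e^+) + d_{\tmmathbf{m}}((e')^-, (e')^+) \leqslant \Delta(\tmmathbf{m})$ (in fact one can likely do better, but this is what is claimed), and the triangle inequality gives
\[
d_{\tmmathbf{m}}(v, v') \leqslant d_{\tmmathbf{m}}(v, e^+) + \big((k-r) + (k'-r)\big) + d_{\tmmathbf{m}}((e')^+, v') \leqslant d^\circ_{\tmmathbf{q}}(e,e') + \Delta(\tmmathbf{m}),
\]
as desired.

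The step I expect to be the genuine obstacle is the local one: verifying that each vertex of $\tmmathbf{q}$ lying in $V(\tmmathbf{m})$ (i.e. not in $V_{\max}(\tmmathbf{q})$), together with a chosen neighbor reached via an edge pointing towards $v_\ast$, is incident to a common face of $\tmmathbf{m}$ whose degree is at most $\Delta(\tmmathbf{m})$, and then bounding the number of $\tmmathbf{m}$-steps needed to walk between them along that face by something that, summed over the two endpoints, does not exceed $\Delta(\tmmathbf{m})$. This requires looking carefully at how the green arcs are organized around the local maximum inside each face of $\tmmathbf{m}$, and checking the edge cases (e.g. when $e$ itself is special, so $\tilde e$ already realizes the step in $\tmmathbf{m}$ and no detour is needed, or when the common suffix exhausts one of the two geodesics so that $r = k+1$ or $r = k'+1$). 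Everything else is bookkeeping: the passage from special $\tmmathbf{q}$-chains to $\tmmathbf{m}$-chains is handed to us by Proposition~\ref{special} and Definition~\ref{running}, and the arithmetic relating $(k-r)+(k'-r)$ to $d^\circ_{\tmmathbf{q}}(e,e')$ is immediate from the construction of that chain.
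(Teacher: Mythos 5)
Your proof is correct and follows the same route as the paper's: transfer the left-most geodesics to $\tmmathbf{m}$ via Proposition~\ref{special}, splice them along the common suffix to join $e^+$ and $(e')^+$ by a path of length $d^\circ_{\tmmathbf{q}}(e,e')-2$, then spend at most $\Delta(\tmmathbf{m})/2$ at each end to pass from $e^\pm$ to $(e')^\mp$ along the boundary of the face of $\tmmathbf{m}$ containing the $\tmmathbf{q}$-edge. The paper's proof is just a more compressed version of this, likewise leaving the degenerate cases ($r=k+1$ or $r=k'+1$) implicit, where the path length can be $d^\circ$ rather than $d^\circ-2$ but the slack $\Delta(\tmmathbf{m})$ still absorbs the detours.
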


\begin{proof}
  We assume that $e \neq e'$ to avoid trivialities. By Proposition
  \ref{special}, the geodesics $\hat{\gamma} ( e )$ and $\hat{\gamma} ( e' )$
  are special, so that there are paths in $\tmmathbf{m}$ starting from $e^{+}$
  and $( e' )^{+}$ with edges $( \tilde{e}_{1} , \ldots , \tilde{e}_{k} )$ and
  $( \tilde{e}'_{1} , \ldots , \tilde{e}'_{k'} )$ respectively. But then the
  maximal suffix shared by these paths has the same length as the one shared
  by $\gamma ( e )$ and $\gamma ( e' )$. Therefore, we can join $e^{+}$ and $(
  e' )^{+}$ in $\tmmathbf{m}$ by a path of length $d^{\circ}_{\tmmathbf{q}}
  ( e,e' ) -2$. Now by construction of the $\tmop{AB}$ bijection, the edge $e$ lies in
  a single face of $\tmmathbf{m}$, so that we can join $e^{-}$ to $e^{+}$ with
  a path of length at most $\Delta ( \tmmathbf{m} ) /2$. The same is true for
  the extremities of~$e'$, which allows to conclude. 
\end{proof}

\section{Comparing pointed and non-pointed maps}\label{secpoint}

Let $M_{n}$ be a uniformly distributed random variable in $\mathcal{M}_{n}$,
and let $( M_{n}^{\bullet} ,v_{\ast} )$ be a uniformly distributed random
variable in $\mathcal{M}_{n}^{\bullet}$. The superscript in $M_{n}^{\bullet}$
is here to indicate that, even after forgetting the distinguished vertex
$v_{\ast}$, it does not have same distribution as $M_{n}$. Rather, it holds
that
\begin{equation}
  P ( M_{n}^{\bullet} =\tmmathbf{m} ) = \frac{\#V ( \tmmathbf{m} )}{\#
  \mathcal{M}^{\bullet}_{n}}  , \qquad \tmmathbf{m}
  \in \mathcal{M}_{n}  . \label{eq:loipoint}
\end{equation}
Note that, by contrast, if $( Q_{n} ,v_{\ast} )$ is a uniformly distributed
random variable in $\mathcal{Q}_{n}^{\bullet}$, then $Q_{n}$ is indeed uniform
in $\mathcal{Q}_{n}$ since a quadrangulation with $n$ faces has $n+2 $
vertices, so that pointing such a quadrangulation does not introduce a bias.
The goal of this subsection is to obtain the following comparison theorem for
the laws of $M_{n}$ and $M_{n}^{\bullet}$. Let $\mu_{n}$ be the law of $M_{n}$
and $\mu_{n}^{\bullet}$ be the law of~$M_{n}^{\bullet}$. We let $\| \cdot \|$
denote the total variation norm of signed measures.

\begin{proposition}
  \label{TV}It holds that $\| \mu_{n} - \mu_{n}^{\bullet} \| \rightarrow 0$ as
  $n \rightarrow \infty$.
\end{proposition}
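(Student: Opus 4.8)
The plan is to compare $\mu_n$ and $\mu_n^\bullet$ directly from their explicit formulas. By~\eqref{eq:loipoint}, for any $\tmmathbf{m}\in\mathcal{M}_n$ we have $\mu_n^\bullet(\{\tmmathbf{m}\}) = \#V(\tmmathbf{m})/\#\mathcal{M}_n^\bullet$, while $\mu_n(\{\tmmathbf{m}\}) = 1/\#\mathcal{M}_n$. Since $\#\mathcal{M}_n^\bullet = \sum_{\tmmathbf{m}\in\mathcal{M}_n}\#V(\tmmathbf{m}) = \#\mathcal{M}_n\cdot \mathbb{E}[\#V(M_n)]$, writing $N_n\assign \#V(M_n)$ for the (random) number of vertices of a uniform map and $\bar N_n\assign \mathbb{E}[N_n]$, one gets the clean identity $\mathd\mu_n^\bullet/\mathd\mu_n(\tmmathbf{m}) = \#V(\tmmathbf{m})/\bar N_n$, hence
$$\|\mu_n-\mu_n^\bullet\| = \tfrac12\,\mathbb{E}\!\left[\,\left|\,1-\tfrac{N_n}{\bar N_n}\,\right|\,\right].$$
So the whole statement reduces to showing that $N_n/\bar N_n \to 1$ in $L^1$, which will follow from a concentration estimate for $N_n$: it suffices to prove $\mathrm{Var}(N_n) = o(\bar N_n^2)$ and then apply Cauchy--Schwarz (and, if needed, uniform integrability of $N_n/\bar N_n$, which is immediate since $0\le N_n \le n+1$ and $\bar N_n$ is of order $n$, as a map with $n$ edges has at most $n+1$ vertices and — because the number of vertices with an Euler-characteristic argument against the number of faces — typically a positive fraction of them).

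First I would pin down the first two moments of $N_n$. By Euler's formula a rooted plane map with $n$ edges and $N$ vertices has $2+n-N$ faces, so counting the number of vertices is equivalent, up to the affine change $V\leftrightarrow F$, to counting faces. The generating function of rooted plane maps by edges is classical (Tutte): $\#\mathcal{M}_n = 2\cdot 3^n\,\mathrm{Cat}(n)/(n+2)$ up to the standard normalization, and the bivariate generating function refining by the number of faces (equivalently vertices) is also known in closed form. From the bivariate series one extracts $\bar N_n\sim c\,n$ for an explicit constant $c\in(0,1)$ and, more importantly, $\mathbb{E}[N_n(N_n-1)] = \bar N_n^2(1+o(1))$, i.e. the variance is $o(n^2)$. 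Alternatively — and this is probably the cleaner route given the tools of the paper — one can transport the computation to quadrangulations: a pointed quadrangulation with $n$ faces has exactly $n+2$ vertices, split as $V_0\sqcup V_1$; under the trivial bijection $\#V(\tmmathbf{m}) = \#V_0(\tmmathbf{q})$, so $N_n$ has the law of $\#V_0$ for a \emph{uniform} $Q_n$, and one needs concentration of $\#V_0(Q_n)$ around its mean. This last fact is well documented in the quadrangulation literature (it follows e.g. from the Schaeffer bijection: $\#V_0(\tmmathbf{q})-1$ is the number of vertices with even label in the associated labeled tree, or one simply invokes known second-moment estimates), so I would cite it.

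The key steps, in order, are: (1) derive the exact formula $\|\mu_n-\mu_n^\bullet\| = \tfrac12\mathbb{E}|1-N_n/\bar N_n|$ from~\eqref{eq:loipoint} and the counting identity for $\#\mathcal{M}_n^\bullet$; (2) identify the law of $N_n$, either via Tutte's bivariate generating function for rooted maps by edges and faces, or via the trivial bijection with $\#V_0(Q_n)$; (3) establish $\mathrm{Var}(N_n)=o(\bar N_n^2)$ with $\bar N_n = \Theta(n)$; (4) conclude by Cauchy--Schwarz together with the trivial bound $N_n/\bar N_n \le (n+1)/\bar N_n = O(1)$, which gives $L^1$-convergence and hence $\|\mu_n-\mu_n^\bullet\|\to 0$. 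The main obstacle is step~(3): getting the variance estimate requires either a nontrivial singularity analysis of the bivariate generating function (locating the dominant singularity jointly in the two variables and checking the variance is subcritical) or, on the quadrangulation side, a clean reference for concentration of $\#V_0(Q_n)$. I would favor the quadrangulation route, since the relation $\#V(\tmmathbf{m})=\#V_0(\tmmathbf{q})$ is given for free by the trivial bijection recalled in the excerpt and the needed concentration is essentially a first-moment/second-moment computation already available in the literature on uniform quadrangulations.
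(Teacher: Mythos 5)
Your reduction is correct and matches the paper's: from~\eqref{eq:loipoint} and the exact counts $\#\mathcal{M}_n = \frac{2}{n+2}\frac{3^n}{n+1}\binom{2n}{n}$, $\#\mathcal{M}_n^\bullet = \frac{3^n}{n+1}\binom{2n}{n}$ (the latter via the AB bijection), the total variation norm equals $E\big[\,\big|2\#V(M_n)/(n+2)-1\big|\,\big]$, so everything hinges on concentration of $\#V(M_n)$. The transfer to $\#V_0(Q_n)$ via the trivial bijection is also exactly the paper's move, and the first moment $E[\#V(M_n)]=(n+2)/2$ follows from the $V_0\leftrightarrow V_1$ symmetry of uniformly rooted quadrangulations. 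So far this is the same proof.

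The gap is step (3). You do not actually establish $\mathrm{Var}(\#V_0(Q_n))=o(n^2)$; you defer it either to a bivariate-generating-function analysis (which you do not carry out) or to ``known second-moment estimates'' in the quadrangulation literature (for which you give no reference, and I am not aware of a clean off-the-shelf statement). This is precisely the nontrivial content of the paper's proof, and it is done there in full. Your heuristic description is also slightly off: under the CVS bijection with a uniformly chosen $v_*$, the vertices of $V_0(Q_n)$ are not those with even label, but those with $\ell_n(v)+\epsilon$ even, where $\epsilon\in\{0,1\}$ is the extra bit output by CVS. The paper's second-moment argument then runs as follows: sample $U,V$ uniformly from the tree; conditionally on the tree, the label increments along the branch from $U$ to $V$ are i.i.d.\ uniform on $\{-1,0,1\}$, so the parities of $\ell_n(U)$ and $\ell_n(V)$ evolve as an irreducible two-state Markov chain converging geometrically to the uniform stationary law; combined with the fact that $d_{T_n}(U,V)/\sqrt{2n}$ has a Rayleigh limit (hence $d_{T_n}(U,V)\to\infty$ in probability) and the independent Bernoulli$(1/2)$ variable $\epsilon$, this gives $P(\ell_n(U)+\epsilon \equiv \ell_n(V)+\epsilon \equiv 0 \,[\mathrm{mod}\,2]) \to 1/4$, hence $E[\#V(M_n)^2]\sim n^2/4$ and $L^2$-convergence of $2\#V(M_n)/n$ to $1$. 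To make your proof complete you should either supply this computation or locate a genuine citation; as written, the crucial variance bound is asserted rather than proved.
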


\begin{proof}
  By (\ref{eq:loipoint}), one has
  \[ \| \mu_{n} - \mu_{n}^{\bullet} \| = \sum_{\tmmathbf{m} \in
     \mathcal{M}_{n}} \left| \frac{1}{\# \mathcal{M}_{n}} - \frac{\#V (
     \tmmathbf{m} )}{\# \mathcal{M}_{n}^{\bullet}} \right|  . \]
  Now recall that
  \[ \# \mathcal{M}_{n} =\# \mathcal{Q}_{n} = \frac{2}{n+2} 
     \frac{3^{n}}{n+1} \binom{2n}{n} , \qquad \#
     \mathcal{M}_{n}^{\bullet} = \frac{1}{2} \# \mathcal{Q}_{n}^{\bullet} =
     \frac{3^{n}}{n+1}   \binom{2n}{n}  , \]
  where we used the trivial graph bijection between a rooted map with $n$
  edges and a rooted quadrangulation with $n$ faces on the one hand, and the
  $\tmop{AB}$ bijection on the other hand. This implies that
  \begin{equation}
    \| \mu_{n} - \mu_{n}^{\bullet} \| = \frac{1}{\# \mathcal{M}_{n}}
    \sum_{\tmmathbf{m} \in \mathcal{M}_{n}} \left| \frac{2\, \#V ( \tmmathbf{m}
    )}{n+2} -1 \right|  =E \left[ \left| \frac{2\, \#V ( M_{n} )}{n+2}
    -1 \right| \right]  . \label{eq:TV}
  \end{equation}

  To show that this vanishes as $n \rightarrow \infty$, we compute the first
  two moments of $\#V ( M_{n} )$. Note that by the trivial graph bijection,
  $\#V ( M_{n} )$ has same distribution as the number of vertices at even
  distance from the root vertex $e_{\ast}^{-}$ in a uniform rooted
  quadrangulation $Q_{n}$. By an obvious symmetry argument, this
  implies that
  \begin{equation}
    E \big[ \#V ( M_{n} ) \big] = \frac{1}{2} E \big[ \#V ( Q_{n} ) \big] = \frac{n+2}{2}. \label{eq:3}
  \end{equation}
  For the second moment, we use the $\tmop{CVS}$ bijection again. Select a uniform
  random vertex $v_{\ast}$ among the $n+2$ elements of $V ( Q_{n} )$ and let
  $( ( T_{n} , \ell_{n} ) , \epsilon ) = \tmop{CVS} ( Q_{n} ,v_{\ast} )$.
  Since $\ell_{n} ( v ) =d_{Q_{n}} ( v,v_{\ast} ) -d_{Q_{n}} ( e_{\ast}^{-}
  ,v_{\ast} ) - \epsilon$ for every $v \in V ( Q_{n} )$, we have that the
  vertices $v$ at even distance from $e_{\ast}^{-}$ are those for which
  $\ell_{n} ( v ) + \epsilon$ is even. So
  \begin{align*}
    E \big[ \#V ( M_{n} )^{2} \big] & = E \left[ \sum_{u,v \in V ( T_{n} )
    \cup \{ v_{\ast} \}} \mathbf{1}_{\{ \ell_{n} ( u ) + \epsilon \equiv
    \ell_{n} ( v ) + \epsilon \equiv 0  [ \tmop{mod}  2 ] \}} \right]\\
    & = ( n+2 )^{2}\, P \big( \ell_{n} ( U ) + \epsilon \equiv \ell_{n} ( V ) +
    \epsilon \equiv 0  [ \tmop{mod}  2 ] \big) .
  \end{align*}
  where $U,V$ are uniformly chosen in $V ( T_{n} ) \cup \{ v_{\ast} \}$
  conditionally given $T_{n}$ and independently of $( \ell_{n} , \epsilon )$.
  
  Plainly, the probability under consideration is equivalent to the same
  quantity where $U,V$ are instead chosen uniformly in $V ( T_{n} )$.
  Furthermore, conditionally given $T_{n} ,U,V$, the labels along the branch
  from $U$ to $V$ in $T_{n}$ form a random walk with i.i.d.\ steps that are
  uniform in $\{ -1,0,1 \}$, and thus the parity of the labels follow an
  irreducible Markov chain with values in \{0,1\} with transition matrix
  $\begin{pmatrix}
    1/3 & 2/3\\
    2/3 & 1/3\\
  \end{pmatrix}$ and stationary measure $( 1/2,1/2 )$. It follows that
  the probability that $\ell_{n} ( U )$ and $\ell_{n} ( V )$ have same parity
  is a function of $d_{T_{n}} ( U,V )$ with limit $1/2$ at infinity, while the
  probability that $\ell_{n} ( U ) + \epsilon$ is even is exactly $1/2$ since
  $\epsilon$ is a $\tmop{Bernoulli}(1/2)$ random variable independent of $( T_{n} ,
  \ell_{n} ,U,V )$. On the other hand, it is classical that $d_{T_{n}} ( U,V )
  / \sqrt{2n}$ converges to a Rayleigh distribution as $n \rightarrow \infty$,
  so that $d_{T_{n}} ( U,V )$ converges to $\infty$ in probability. These
  facts easily entail that $P ( \ell_{n} ( U ) + \epsilon \equiv \ell_{n} ( V
  ) + \epsilon \equiv 0  [ \tmop{mod}  2 ] )$ converges to $1/4$ as $n
  \rightarrow \infty$. Consequently,
  \begin{equation}
    E \big[ \#V ( M_{n} )^{2} \big] = \frac{n^{2}}{4} \big( 1+o ( 1 ) \big)  ,
    \qquad \text{as }  n \rightarrow \infty  . \label{eq:4}
  \end{equation}
  Together, equations (\ref{eq:3}) and (\ref{eq:4}) imply that $2 \, \#V ( M_{n} ) /n$
  converges to $1$ in $L^{2}$, which entails the result by~(\ref{eq:TV}).
\end{proof}

From this, we deduce a bound in probability for $\Delta ( M_{n}^{\bullet} )$.
Theorem~3 of Gao and Wormald {\cite{gaowor00}} shows that if $\Delta^{V} (
M_{n} )$ denotes the largest degree of a vertex in $M_{n}$, then
\[ P ( \Delta^{V} ( M_{n} ) > \log  n ) \rightarrow 0 \qquad
   \text{as } n \rightarrow \infty    . \]
From the obvious fact that the dual map of $M_{n}$ has same distribution as
$M_{n}$, the same is true if we replace $\Delta^{V} ( M_{n} )$ with
$\Delta ( M_{n} )$. By Proposition \ref{TV} we conclude that the same holds
for $M_{n}^{\bullet}$. 

\begin{lemma}
  \label{oplogn}It holds that as $n \rightarrow \infty$,
  \[ P(\Delta ( M_{n}^{\bullet} )> \log  n )\to 0  . \]
\end{lemma}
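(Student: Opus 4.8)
The plan is to deduce this directly from the bound already available for $M_n$, by transporting it through the total variation estimate of Proposition~\ref{TV}. First I would recall the fact quoted just above the statement: by Theorem~3 of Gao--Wormald \cite{gaowor00}, $P ( \Delta^{V} ( M_{n} ) > \log n ) \to 0$, where $\Delta^{V}$ denotes the largest vertex degree. Then I would invoke map duality: the operation $\tmmathbf{m} \mapsto \tmmathbf{m}^{\ast}$ exchanging vertices and faces (with the usual convention for dualizing the root edge) is an involution of $\mathcal{M}_{n}$, hence preserves the uniform law $\mu_{n}$; since it turns the largest face degree into the largest vertex degree and conversely, one gets $P ( \Delta ( M_{n} ) > \log n ) = P ( \Delta^{V} ( M_{n} ) > \log n ) \to 0$.

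Next, set $A_{n} = \{ \tmmathbf{m} \in \mathcal{M}_{n} : \Delta ( \tmmathbf{m} ) > \log n \}$. Using that the total variation norm controls the difference of the masses of any event,
\[
\big| \mu_{n}^{\bullet} ( A_{n} ) - \mu_{n} ( A_{n} ) \big| \leqslant \| \mu_{n} - \mu_{n}^{\bullet} \| ,
\]
and the right-hand side tends to $0$ by Proposition~\ref{TV}. Combining this with $\mu_{n} ( A_{n} ) \to 0$ from the previous paragraph yields $P ( \Delta ( M_{n}^{\bullet} ) > \log n ) = \mu_{n}^{\bullet} ( A_{n} ) \to 0$, which is exactly the claim.

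\textbf{Where the work is.} There is no serious obstacle here; the statement is essentially a formal consequence of two ingredients that are already in hand. The only point deserving (routine) care is the duality step: one has to be sure that dualizing a rooted plane map with $n$ edges gives again a well-defined rooted plane map with $n$ edges, and that this is a bijection of $\mathcal{M}_{n}$ onto itself, so that the uniform model is self-dual and face degrees have the same law as vertex degrees. Granting that standard fact, the rest is just the elementary inequality for total variation together with Proposition~\ref{TV}, so I would keep the proof to a couple of lines.
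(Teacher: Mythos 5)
Your proof is correct and follows exactly the same route as the paper: invoke Gao--Wormald for the largest vertex degree of $M_n$, pass to face degrees by self-duality of $\mathcal{M}_n$, and transfer the conclusion from $\mu_n$ to $\mu_n^{\bullet}$ via Proposition~\ref{TV}. The only difference is cosmetic — you spell out the total variation inequality $\big| \mu_{n}^{\bullet} ( A_{n} ) - \mu_{n} ( A_{n} ) \big| \leqslant \| \mu_{n} - \mu_{n}^{\bullet} \|$, which the paper leaves implicit.
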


\section{Encoding with processes and convergence results}\label{secenc}

We now proceed by following the general approach introduced by Le Gall
{\cite{legall06,legall11}}, which we mentioned in the Introduction. It first
requires to code maps with stochastic processes. Let $( Q_{n} ,v_{\ast} )$ be
a uniform random element of $\mathcal{Q}_{n}^{\bullet}$, and let $ ( (
T_{n} , \ell_{n} ) , \epsilon ) = \tmop{CVS} ( Q_{n} ,v_{\ast} )$ and $( (
M_{n}^{\bullet} ,v_{\ast} ) , \epsilon ) = \tmop{AB} ( Q_{n} ,v_{\ast} )$.
Since $\tmop{CVS}$ and $\tmop{AB}$ are bijections, the random variables $(
T_{n} ,l_{n} )$ and $( M_{n}^{\bullet} ,v_{\ast} )$ are respectively uniform
in $\mathbb{T}_{n}$ and $\mathcal{M}_{n}^{\bullet}$, while
$\epsilon$ is uniform in \{0,1\} and independent of $( T_{n} , \ell_{n} )$ and
$( M_{n}^{\bullet} ,v_{\ast} )$. Note that our conventions imply that the
variable $\epsilon$ is indeed the same in the images by the two bijections.

\subsection{Coding with discrete processes}

For $i \in \{ 0,1, \ldots ,2n \}$ we let $c_{i}$ be the $i$-th corner of
$T_{n}$ in contour order, starting from the root corner, so in particular
$c_{0} =c_{2n}$. We extend this to a sequence $( c_{i} ,i \in \mathbb{Z} )$
by $2n$-periodicity. Let also $v_{i}$ be the vertex of $T_{n}$ that is
incident to $c_{i}$. The contour and label functions of $( T_{n,} \ell_{n} )$
are defined by
\[ C_{n} ( i ) =d_{T_{n}} ( v_{i} ,v_{0} )  , \qquad L_{n} ( i )
   = \ell_{n} ( v_{i} )  , \qquad i \in \{ 0,1, \ldots ,2n \}
    , \]
and these functions are extended to continuous functions $[ 0,2n ] \rightarrow
\mathbb{R}$ by linear interpolation between integer coordinates. Now recall
that the sets $V ( T_{n} )$ and $V ( Q_{n} ) \setminus \{ v_{\ast} \}$ are
identified by the $\tmop{CVS}$ bijection, so that we can view $v_{i}$, $0 \leqslant i
\leqslant 2n$ as elements of $V ( Q_{n} )$. With this identification we let
\[ D_{n} ( i,j ) =d_{Q_{n}} ( v_{i} ,v_{j} )  , \qquad i,j \in
   \{ 0,1, \ldots ,2n \}  , \]
and we extend $D_{n}$ to a continuous function $[ 0,2n ]^{2} \rightarrow
\mathbb{R}$ by linear interpolation between integer coordinates, successively
on each coordinate. We also let, for $i$, $j \in \{ 0,1, \ldots ,2n \}
$,
\[ D^{\circ}_{n} ( i,j ) =L_{n} ( i ) +L_{n} ( j ) -2 \max ( \check{L}_{n}
   ( i,j ) , \check{L}_{n} ( j,i ) ) +2.\mathbf{1}_{\{ \max (
   \check{L}_{n} ( i,j ) , \check{L}_{n} ( j,i ) ) <L_{n} ( i ) \wedge
   L_{n} ( j ) \}} , \qquad \]
where $\check{L}_{n} ( i,j ) = \inf \{ L_{n} ( k ) :i \leqslant k \leqslant j
\}$ if $i \leqslant j$ and $\check{L}_{n} ( i,j ) = \inf \{ L_{n} ( k ) :k \in
[ 0,j ] \cup [ i,2n ] \}$ if $i>j$. The somehow unusual indicator in this
definition only serves the purpose to match our definition of
$d^{\circ}_{\tmmathbf{q}}$; see Lemma~\ref{dno}.

We now recall how the mapping $\tmop{CVS}^{-1}$ is constructed. Starting from
a given plane embedding of $T_{n}$, we add the extra vertex $v_{\ast}$
arbitrarily in the unique face $f$ of the map $T_{n}$, and declare it to be
incident to a unique corner that we denote by $c_{\infty}$. Next, for every $i
\in \mathbb{Z}$ we let $s ( i ) = \inf \{ j>i:L_{n} ( j ) =L_{n} ( i ) -1
\}$, which we call the successor of $i$. Note that $s ( i ) = \infty$ if
$L_{n} ( i ) = \min  L_{n}$. The successor of the corner $c_{i}$ is then $s (
c_{i} ) =c_{s ( i )}$ by definition. The construction then consists in drawing
an arc $e_{i}$ from $c_{i}$ to $s ( c_{i} )$ for every $i \in \{ 0,1, \ldots
,2n-1 \}$, in such a way that these arcs do not cross each other, and that the
relative interior of $e_{i}$ is contained in $f$. This construction uniquely
defines a map, which is $Q_{n}$, and this map is pointed at $v_{\ast}$ (here
again, we will not specify the rooting convention). By construction, there is
a one-to-one correspondence between the corners $c_{i}$ of $T_{n}$ and the
edges $e_{i}$ of $Q_{n}$. It turns out that the natural orientation of $e_{i}$
obtained in the construction (that is, from $v_{i}$ to $v_{s ( i )}$)
coincides with the orientation that we introduced above for quadrangulations,
namely, $e_{i}$ points towards $v_{\ast}$ in $Q_{n}$. Consequently, the
oriented paths following the arcs are geodesics towards~$v_{\ast}$. See
Figure~\ref{fig:notation}.

\begin{figure}[ht]
  \centering\includegraphics{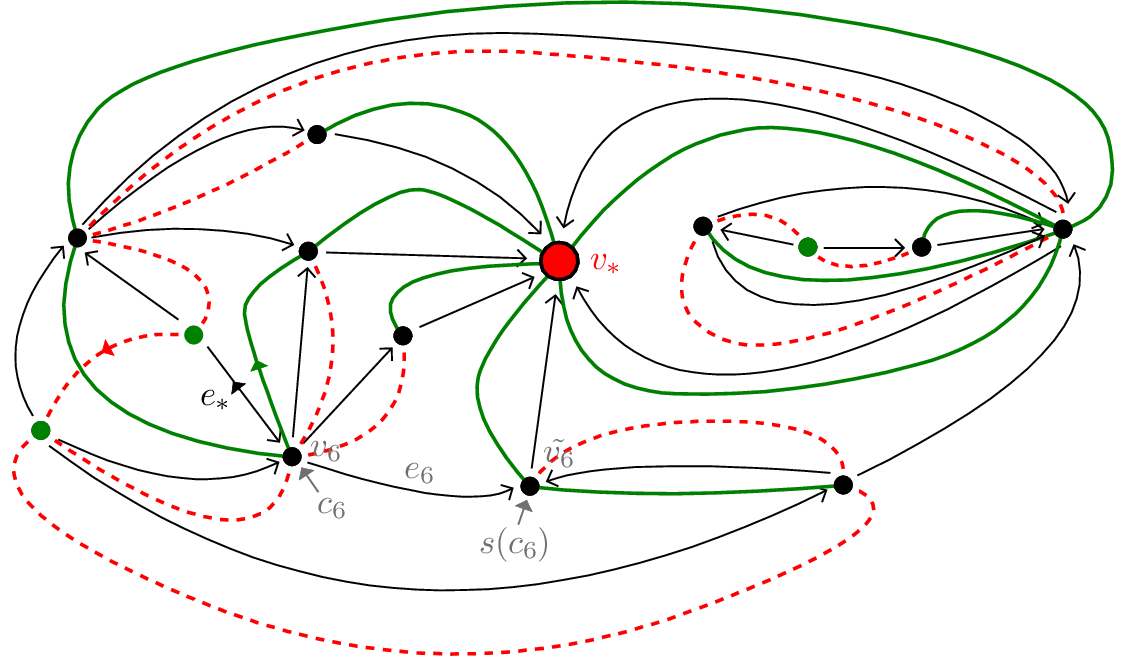}
  \caption{The two bijections, and some notation. The three green vertices
  correspond to the three faces of the map obtained by the $\tmop{AB}$
  bijection.}
	\label{fig:notation}
\end{figure}

\begin{lemma}
  \label{dno}Let $e$ be an edge of $Q_{n}$, and let $c$ be the corner of
  $T_{n}$ such that $e$ is the arc linking~$c$ with $s ( c )$. Let
  $k=d_{\tmmathbf{q}} ( e^{-} ,v_{\ast} ) -1$ and for $0 \leqslant i \leqslant
  k$ let $e_{( i )}$ be the arc going from $s^{i} ( c )$ to $s^{i+1} ( c )$.
  Then the chain $( e_{( 0 )} ,e_{( 1 )} , \ldots ,e_{( k )} )$ is the
  left-most geodesic to $v_{\ast}$ with first step $e$. Consequently,
  \[ D_{n}^{\circ} ( i,j ) =d_{Q_{n}}^{\circ} ( e_{i} ,e_{j} )  , 
     \qquad i,j \in \{ 0,1, \ldots ,2n \}  . \]
\end{lemma}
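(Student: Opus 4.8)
The statement has two parts: first, that iterating the successor map along $c$ produces exactly the left-most geodesic $\gamma(e)$ with first step $e$; second, that the combinatorial quantity $D_n^\circ(i,j)$ equals $d^\circ_{\mathbf q}(e_i,e_j)$. My plan is to establish the first (geometric) part carefully, since the second follows by essentially bookkeeping once we know the successor chains realize the left-most geodesics.

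For the first part, I would argue by induction on $k = d_{\mathbf q}(e^-,v_\ast)-1$, or more precisely step by step along the chain. The key local fact is the standard description of the CVS arcs: the arc $e_{(i)}$ goes from the corner $s^i(c)$, incident to a vertex at label $\ell = L_n(i)$-value $l$ say, to its successor corner $s^{i+1}(c)$, which is the \emph{next} corner in contour order (cyclically) carrying label $l-1$. One then has to check two things: (a) that the oriented edge $e_{(i)}$, oriented from $s^i(c)$ to $s^{i+1}(c)$, points towards $v_\ast$ in the sense $d_{\mathbf q}((e_{(i)})^+,v_\ast) = d_{\mathbf q}((e_{(i)})^-,v_\ast)-1$ --- this is already recorded in the text (the natural orientation of CVS arcs points towards $v_\ast$), so the chain is automatically a geodesic since it has the right length $k$; and (b) that among all geodesics with first step $e$, this one satisfies the defining left-most property, namely that the clockwise sector between $e_{(i-1)}$ and $e_{(i)}$ at the vertex $(e_{(i)})^- = s^i(c) $-vertex, excluding $e_{(i)}$, contains only edges pointing towards $(e_{(i-1)})^- = s^{i-1}(c)$-vertex. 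For (b), I would unfold the definition of the successor corner: $s^i(c)$ being the next corner with the smaller label means, reading corners clockwise around the incident vertex from the incoming arc $e_{(i-1)}$, that one meets corners whose successor arcs all lead back towards the previous vertex on the chain until one reaches the corner $s^i(c)$ itself; corners encountered before that lie in subtrees hanging off on the appropriate side and their successor arcs point "inward", which is exactly the left-most condition. This is the place where one genuinely has to look at Figure~\ref{fig:lmg} and match the CVS local picture with the definition of $\gamma(e)$; I expect this to be the main obstacle, as it is a somewhat fiddly planar-combinatorics verification rather than a deep argument.

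For the second part, once we know the chains $(e_{(0)},\dots,e_{(k)})$ and $(e'_{(0)},\dots,e'_{(k')})$ obtained from $s$ are the left-most geodesics $\gamma(e_i)$ and $\gamma(e_j)$, I would identify their maximal common suffix with a statement about the successor iteration: two corners $s^a(c_i)$ and $s^b(c_j)$ coincide (and hence the arcs thereafter coincide) exactly when the iterated successors merge, and the first merging point is controlled by the running infimum of $L_n$ between $i$ and $j$ (cyclically). Concretely, the successor chains of $i$ and $j$ both pass through the corner of minimal label in the arc of the contour "between" them; tracking how many steps each takes to reach that corner yields $r = $ (number of shared arcs) $= \min(L_n(i),L_n(j)) - \check m + \mathbf 1_{\{\cdots\}}$ with $\check m = \max(\check L_n(i,j),\check L_n(j,i))$, where the indicator corrects for whether the two chains reach the common suffix already at the minimal-label vertex or one step before it --- this is precisely the "somehow unusual indicator" flagged before the lemma. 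Then $d^\circ_{\mathbf q}(e_i,e_j)$, being $k + k' - 2r + 2$ by its definition as the length of the concatenated chain through the fork, rearranges to $L_n(i) + L_n(j) - 2\check m + 2\cdot\mathbf 1_{\{\check m < L_n(i)\wedge L_n(j)\}}$, which is exactly $D_n^\circ(i,j)$. I would present this as: (i) $k = L_n(i)$ and $k' = L_n(j)$ after recentering the labels (using $\mathbf l_+ = \mathbf l + \text{const}$, so differences are unaffected — actually $k = d_{\mathbf q}(v_i,v_\ast)-1$ and one must carry the $-1$'s through, but they cancel); (ii) the common-suffix length computation via the running infimum; (iii) algebra. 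Steps (i) and (iii) are routine; step (ii) is where the indicator subtlety must be handled, but it is a direct consequence of the successor-chain description already set up for part one.
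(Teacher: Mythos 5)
Your proposal is correct and follows essentially the same route as the paper's short proof. Part~1 of the argument is in both cases a direct check that the successor chain satisfies the defining left-most property; the paper phrases it as the observation that, by construction of the $\tmop{CVS}$ arcs, every arc in the relevant clockwise sector around $e_{(i)}^{+}$ points toward $e_{(i)}^{+}$, which is exactly the content of your step~(b) once the picture is straightened out. (Your wording is a bit garbled here --- you speak of ``corners whose successor arcs all lead back towards the previous vertex,'' whereas the arcs you need to account for include both the outgoing successor arc $e_{(i+1)}$ and the remaining \emph{incoming} arcs at the corner $s^{i+1}(c)$, and it is the latter that are automatically inward-pointing; but this is a matter of exposition, not of substance.) For part~2, the paper contents itself with the one-line remark that the indicator in $D^{\circ}_{n}$ encodes the event that neither left-most geodesic is a suffix of the other, implicitly relying on the standard merge-at-the-running-infimum description of iterated successors, whereas you spell out that computation: the merge corner has label $\max(\check{L}_{n}(i,j),\check{L}_{n}(j,i))-1$, the suffix length $r$ follows, the $\min L_n$ shift cancels in $d^{\circ}_{Q_n}=k+k'-2r+2$, and the boundary case $\check{m}=L_n(i)\wedge L_n(j)$ accounts for the indicator. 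This is the same argument, just carried out explicitly rather than cited; it is fine and arguably more self-contained than the paper's phrasing.
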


\begin{proof}
  Fix $i \in \{ 0,1, \ldots ,k \}$. By construction, every arc between
  $e_{( i )}$ and $e_{( i+1 )}$ in the clockwise order around $e_{( i
    )}^{+}$ is necessarily pointing toward $e_{( i )}^{+}$. The first
  claim easily follows. The second claim follows by noticing that the
  event $\{ \max ( \check{L}_{n} ( i,j ) , \check{L}_{n} ( j,i ) )
  <L_{n} ( i ) \wedge L_{n} ( j ) \}$ appearing in the indicator in
  the definition of $D^\circ_n$ says that neither of the left-most
  geodesic to $v_*$ with first steps $e_i$ or $e_j$ is a suffix of the
  other.
\end{proof}

We now define a function $\tilde{D}_{n}$ similar to $D_{n}$ but associated
with the map $M_{n}^{\bullet}$. Recall that $e_{i}$ is the arc of $Q_{n}$ from
the corner $c_{i}$ of $T_{n}$ to $s ( c_{i} )$. We let $\tilde{v}_{i}
=e_{i}^{+}$ so that for every $i \in \{ 0,1, \ldots ,2n \}$, $\tilde{v}_{i}$
is always an element of $V ( M_{n}^{\bullet} )$. Set
\[ \tilde{D}_{n} ( i,j ) =d_{M_{n}^{\bullet}} ( \tilde{v}_{i} , \tilde{v}_{j}
   )  , \qquad i,j \in \{ 0,1, \ldots ,2n \}  . \]
We also extend $\tilde{D}_{n}$ to a continuous function $[ 0,2n ]^{2}
\rightarrow \mathbb{R}$ as we did for $D_{n}$. Clearly, the set $\left\{
\tilde{v}_{i} :i \in \{ 0,1, \ldots ,2n \} \right\}$ is equal to
$V ( M_{n}^{\bullet} )$, so that $( \{ 0,1, \ldots ,2n \} , \tilde{D}_{n} )$
is a pseudo-metric space isometric to $( V ( M_{n}^{\bullet} )
,d_{M_{n}^{\bullet}} )$ through the mapping $i \mapsto \tilde{v}_{i}$.
Combining Corollary \ref{majoration} and Lemma \ref{dno}, we obtain the bound
\begin{equation}
  \tilde{D}_{n} ( i,j ) \leqslant D_{n}^{\circ} ( i,j ) +\Delta_{n}  ,
  \qquad i,j \in \{ 0,1, \ldots ,2n \}  , \label{boundtilde}
\end{equation}
where $\Delta_{n} \assign \Delta ( M_{n}^{\bullet} )$, and this remains true
for every $s$, $t \in [ 0,2n ]$ in place of $i$, $j$.

\subsection{Scaling limits and proof of Theorem \ref{principal}}

We now introduce renormalized versions of our encoding processes. Namely, for
$s,t \in [ 0,1 ]$, let
\[ C_{( n )} ( s ) = \frac{C_{n} ( 2n  s )}{\sqrt{2n}}  ,
   \quad L_{( n )} ( s ) = \left( \frac{9}{8n} \right)^{1/4} L_{n} ( 2n
    s )  , \quad D_{( n )} ( s,t ) = \left(
   \frac{9}{8n} \right)^{1/4} D_{n} ( 2n  s,2n  t ) \]
and define $D^{\circ}_{( n )} ( s,t )$ and $\tilde{D}_{( n )} ( s,t )$
similarly to $D_{( n )}$ by replacing $D_{n}$ with $D_{n}^{\circ}$ and
$\tilde{D}_{n}$. The main result of {\cite{legall11,miermont11}}
(which implies \eqref{eq:convQ}) shows that
one has the following convergence in distribution as $n \rightarrow \infty$ in
$\mathcal{C} ( [ 0,1 ] ,\mathbb{R} ) \times \mathcal{C} ( [ 0,1 ]
,\mathbb{R} ) \times \mathcal{C} ( [ 0,1 ]^{2} ,\mathbb{R} )$:
\begin{equation}
  ( C_{( n )} ,L_{( n )} ,D_{( n )} ) \longrightarrow ( \tmmathbf{e},Z,D )
   , \label{eq:convmap}
\end{equation}
where $( \tmmathbf{e},Z )$ is a pair of stochastic processes sometimes called
the head of the Brownian snake, and $D$ is a random pseudo-distance on $[ 0,1
]$ defined from $( \tmmathbf{e},Z )$ as follows. Define two pseudo-distances
on $[ 0,1 ]$ by the formulas
\[ d_{\tmmathbf{e}} ( s,t ) =\tmmathbf{e} ( s ) +\tmmathbf{e} ( t ) -2 \min
   \{ \tmmathbf{e} ( u ) :s \wedge t \leqslant u \leqslant s \vee t \} \]
and
\[ d_{Z} ( s,t ) =Z ( s ) +Z ( t ) -2 \max ( \check{Z} ( s,t ) , \check{Z} (
   t,s ) )  , \]
where similarly as for the definition of $D^{\circ}_{n}$ we let $\check{Z} (
s,t ) = \min \{ Z ( u ) :s \leqslant u \leqslant t \}$ if $s \leqslant t$, and
$\check{Z} ( s,t ) = \min \{ Z ( u ) :u \in [ s,1 ] \cup [ 0,t ] \}$
otherwise. Then $D$ is the largest pseudo-distance $d$ on $[ 0,1 ]$ that
satisfies the following two properties:
\begin{equation}
  \{ d_{\tmmathbf{e}} =0 \} \subset \{ d=0 \} \qquad \text{ and }
  \qquad d \leqslant d_{Z}  . \label{defBM}
\end{equation}
At this point, we recall that the Brownian map $\tmmathbf{S}$ is the quotient
space $[ 0,1 ] / \{ D=0 \}$, endowed with the (true) distance function induced
by $D$ on this set, which we still denote by $D$.

We would like to study the joint convergence of (\ref{eq:convmap}) with
$\tilde{D}_{( n )}$, and show that the limit of the latter is $D$ as well. To
this end, we proceed in three steps.

\paragraph{First step: tightness}We observe that (\ref{eq:convmap}) implies
that $D_{( n )}^{\circ}$ converges (jointly) to $d_{Z}$. On the other hand,
the bound (\ref{boundtilde}) combined with Lemma \ref{oplogn} easily implies
that the laws of $\tilde{D}_{( n )}$, $n \geqslant 1$, \ form a relatively
compact family of probability measures on $\mathcal{C} ( [ 0,1 ]^{2}
,\mathbb{R} )$, by repeating the argument of {\cite{legall06}}. Indeed, for
every $\delta >0$, let
\[ \omega ( \tilde{D}_{( n )} , \delta ) = \sup \left\{ | \tilde{D}_{( n )} (
   s,t ) - \tilde{D}_{( n )} ( s' ,t' ) | : | s-s' | \vee | t-t' |
   \leqslant \delta \right\} \]
be the modulus of continuity of $\tilde{D}_{( n )}$ evaluated at $\delta$, so
by the triangle inequality and (\ref{boundtilde}), we have
\begin{align*}
  \omega ( \tilde{D}_{( n )} , \delta ) & \leqslant 2 \sup   \left\{ \tilde{D}_{( n
  )} ( s,s' ) : | s-s' | \leqslant \delta \right\}\\
  & \leqslant 2 \sup \left\{ D_{( n )}^{\circ} ( s,s' )  : | s-s' |
  \leqslant \delta \right\} + \frac{\Delta_{n}}{( 8n/9 )^{1/4}}  .
\end{align*}
It follows from Lemma \ref{oplogn} and the convergence in distribution
(\ref{eq:convmap}) that for every $\varepsilon >0$,
\[ \limsup_{n \rightarrow \infty}  P \big( \omega ( \tilde{D}_{( n )} , \delta )
   \geqslant \varepsilon \big) \leqslant P \big( 2 \sup \left\{ d_{Z} ( s,s' )
   : | s-s' | \leqslant \delta \right\} \geqslant \varepsilon \big) 
\]
and the a.s.\ continuity of $Z$ implies that this converges to $0$ as $\delta
\rightarrow 0$. Since $\tilde{D}_{( n )} ( 0,0 ) =0$, this entails the
requested tightness result.

Hence, up to extraction of a subsequence $( n_{k} )$, we may assume that
\begin{equation}
  \big( C_{( n )} ,L_{( n )} ,D^{ \circ}_{( n )} ,D_{( n )} ,
  \tilde{D}_{( n )} ,n^{-1/4} \Delta_{n} \big) \longrightarrow \big(
  \tmmathbf{e},Z,d_{Z} ,D, \tilde{D} ,0 \big)  \label{eq:convmapbis}
\end{equation}
in distribution, where $\tilde{D}$ is a random continuous function on $[ 0,1
]^{2}$. In order to simplify the arguments to follow, we apply the Skorokhod
representation theorem, and assume that \emph{the underlying probability
space is chosen so that this convergence holds almost surely rather than in
distribution}. Until the end of the paper, all the convergences as $n
\rightarrow \infty$ are understood to take place along this subsequence $(
n_{k} )$.

\paragraph{Second step: bound on $\tilde{D}$}It is not difficult to check
that $\tilde{D}$ is a pseudo-distance, because $\tilde{D}_{( n )}$ is already
symmetrical and satisfies the triangle inequality, and because $\tilde{D}_{( n
)} ( s,s ) =0$ as soon as $s$ is in $\left\{ k/2n :k \in \{ 0,1, \ldots ,2n
\} \right\}$. Let us prove that $\tilde{D}$ satisfies the properties appearing
in~(\ref{defBM}). First, assume that $d_{\tmmathbf{e}} ( s,t ) =0$. Then it is
elementary to see that there are sequences of integers $i_{n} $,
$j_{n}$ such that $i_{n} /2n$ and $j_{n} /2n$ respectively converge to $s$ and
$t$, and such that $v_{i_{n}} =v_{j_{n}}$. As a consequence, it holds that
$\tilde{v}_{i_{n}}$ and $\tilde{v}_{j_{n}}$ lie in the same face or in two
adjacent faces of $M_{n}^{\bullet}$, and therefore are at distance at most
$\Delta_{n}$ in $M_{n}^{\bullet}$. Consequently, one has that
\[ \tilde{D} ( s,t ) = \lim_{n \rightarrow \infty} \tilde{D}_{( n )} \left(
   \frac{i_{n}}{2n} , \frac{j_{n}}{2n} \right) = \lim_{n \rightarrow \infty}  
   \left( \frac{9}{8n} \right)^{1/4} d_{M_{n}^{\bullet}} ( \tilde{v}_{i_{n}} ,
   \tilde{v}_{j_{n}} ) =0 , \]
as wanted. Finally, the bound $\tilde{D} \leqslant d_{Z}$ is a simple
consequence of (\ref{boundtilde}) and (\ref{eq:convmapbis}).

From this and the definition of $D$ as the largest pseudo-distance satisfying
(\ref{defBM}), we obtain that $\tilde{D} \leqslant D$. On the other hand, let
$s_{\ast}$ be the (a.s.\ unique~{\cite{legweill}}) point at which $Z$ attains
its minimum. Taking a sequence $( i_{n} )$ such that $\tilde{v}_{i_{n}}
=v_{\ast}$, it is not difficult to see, using the convergence of $L_{( n )}$
to $Z$, that $i_{n} /2n$ must converge to $s_{\ast}$. Therefore, by choosing
other sequences $( j_{n} )$ such that $j_{n} /2n$ converges, it follows
from~{\eqref{eq:samegeod}} that, almost surely,
\begin{equation}
  \tilde{D} ( s_{\ast} ,s ) =D ( s_{\ast} ,s ) =Z_{s} -Z_{s_{\ast}}
  \qquad   \text{for every }   s \in [ 0,1 ]  .
  \label{eq:distroot}
\end{equation}
\paragraph{Third step: re-rooting argument}The final crucial property on which
the proof relies is that if $U_{1}$, $U_{2}$ are independent random variables
in $[ 0,1 ]$ that are also independent of all the previously considered random
variables, then
\begin{equation}
  \tilde{D} ( U_{1} ,U_{2} ) \overset{( d )}{=} \tilde{D} (s_{\ast} ,U_{1} )
   . \label{eq:eqdist}
\end{equation}
The proof of this re-rooting identity is a bit long so that we postpone it to
the next Section. Let us see how this concludes the proof of Theorem
\ref{principal}. Observe that~$D$ also satisfies property (\ref{eq:eqdist})
(which can be obtained using the fact that quadrangulations are invariant
under re-rooting, see {\cite{legall11}}). Given this and (\ref{eq:distroot}),
we deduce that
\[ E \big[ \tilde{D} ( U_{1} ,U_{2} ) \big] =E \big[ \tilde{D} ( s_{\ast} ,U_{1} ) \big] =E \big[
   D ( s_{\ast} ,U_{1} ) \big] =E \big[ D ( U_{1} ,U_{2} ) \big]  , \]
which entails that $\tilde{D} ( U_{1} ,U_{2} ) =D ( U_{1} ,U_{2} )$ a.s.,
since we already know that $\tilde{D} \leqslant D$. By Fubini's theorem, this
shows that a.s.\ $\tilde{D}$ and $D$ agree on a dense subset of $[ 0,1 ]^{2}$,
hence everywhere by continuity. The convergence (\ref{eq:convmapbis}) can thus
in part be rewritten
\begin{equation}
  \label{eq:1}
 \big( C_{( n )} ,L_{( n )} ,D_{( n )} , \tilde{D}_{( n )} \big)
\underset{n \rightarrow \infty}{\overset{( d )}{\longrightarrow}}
\big( \tmmathbf{e},Z,D,D \big) , 
\end{equation}
from which it is easy to deduce Theorem~\ref{sec:main-results}, using the
fact that the Gromov--Hausdorff distance between two metric spaces is
bounded by the distortion of any correspondence between these spaces,
see for instance Section 3.3 in {\cite{LGMi11}}. Namely, we can assume
again that \eqref{eq:1} holds almost surely rather than in distribution by a
further use of the Skorokhod representation theorem. Then, if we
denote by $\mathbf{p}:[0,1]\to \tmmathbf{S}$ the canonical projection,
we note that the sets $\left\{ ( v_{\lfloor 2nt\rfloor} ,\mathbf{p}(t)
  ) :t\in [0,1] \right\}$ and $\left\{ (\tilde{v}_{\lfloor
    2nt\rfloor},\mathbf{p}(t) ) :t\in [0,1]\right\}$ are
correspondences between, on the one hand, the metric spaces $( V (
Q_{n} )\setminus\{v_*\} ,(9/8n)^{1/4}d_{Q_{n}} )$ and $( V (
M_{n}^{\bullet} ) ,(9/8n)^{1/4}d_{M_{n}^{\bullet}} )$, and the
Brownian map $(\tmmathbf{S},D)$ on the other hand. Moreover, their
distortions are bounded from above by
\[ \underset{s,t \in [ 0,1 ]}{\sup} \big| D_{(n)} ( \lfloor
2ns\rfloor/2n,\lfloor 2nt\rfloor/2n ) - D(s,t)\big|\quad \mbox{ and
}\quad \underset{s,t \in [ 0,1 ]}{\sup}\big| \tilde{D}_{(n)} ( \lfloor
2ns\rfloor/2n,\lfloor 2nt\rfloor/2n ) - D(s,t)\big|\, , \] which both
converge to $0$ almost surely. This, and the obvious fact that the
Gromov-Hausdorff distance between $( V ( Q_{n} ),d_{Q_{n}} )$ and $( V
( Q_{n} )\setminus\{v_*\} ,d_{Q_{n}} )$ is at most $1$, imply
Theorem~\ref{sec:main-results}. Corollary~\ref{principal} follows by
Proposition~\ref{TV}.

\section{Proof of the re-rooting identity}\label{reroot}

It remains to prove (\ref{eq:eqdist}). This again relies on a limiting
argument. Namely, recall that the distinguished point $v_{\ast}$ in
$M_{n}^{\bullet}$ is a uniformly chosen element of $V ( M_{n}^{\bullet} )$.
Therefore, if $V_{1}$ and $V_{2}$ are two other such elements, chosen
independently, and independently of $v_{\ast}$, then it holds trivially that
\[ d_{M_{n}^{\bullet}} ( V_{1} ,V_{2} ) \overset{( d )}{=}
   d_{M_{n}^{\bullet}} ( v_{\ast} ,V_{1} )  . \]
On the other hand, let $( i_{n} )$ be a sequence of integers such that
$\tilde{v}_{i_{n}} =v_{\ast}$, so that $i_{n} /2n \rightarrow s_{\ast}$. If
$U_{1} ,U_{2}$ are uniform on $[ 0,1 ]$ as above, then they naturally code
the vertices $\tilde{v}_{\lfloor 2n  U_{1} \rfloor} ,
\tilde{v}_{\lfloor 2n  U_{2} \rfloor}$, and so by
{\eqref{eq:convmapbis}} we have that
\[ \left( \frac{9}{8n} \right)^{1/4}\! d_{M_{n}^{\bullet}} ( v_{\ast} ,
   \tilde{v}_{\lfloor 2n  U_{1} \rfloor} ) \underset{n \rightarrow
   \infty}{\longrightarrow} \tilde{D} ( s_{\ast} ,U_{1} ) \quad
   \text{and} \quad \left( \frac{9}{8n} \right)^{1/4}\!
   d_{M_{n}^{\bullet}} ( \tilde{v}_{\lfloor 2n  U_{1} \rfloor} ,
   \tilde{v}_{\lfloor 2n  U_{2} \rfloor} ) \underset{n \rightarrow
   \infty}{\longrightarrow} \tilde{D} ( U_{1} ,U_{2} )  . \]
Therefore, (\ref{eq:eqdist}) would follow directly if the vertices
$\tilde{v}_{\lfloor 2n  U_{1} \rfloor}$ and $\tilde{v}_{\lfloor 2n
 U_{2} \rfloor}$ were uniform in $V ( M_{n}^{\bullet} )$.
Unfortunately, the probability that $\tilde{v}_{\lfloor 2n  U_{1}
\rfloor}$ is equal to a given vertex $v$ of $M_{n}^{\bullet}$ is proportional
to the number of edges~$e$ of $Q_{n}  $pointing towards $v_{\ast}$ such that
$e^{+} =v$. Using the construction of the $\tmop{AB}$
bijection, one can see that this number of edges is precisely the degree of $v$ in
$M_{n}^{\bullet}$, but we leave this as an exercise to the reader as we are
not going to use it explicitly. \

On the other hand, (\ref{eq:eqdist}) will follow if $\tilde{v}_{\lfloor 2n
 U_{1} \rfloor}$ can be coupled with a uniformly chosen vertex $V_{1}$
in $M_{n}^{\bullet}$ in such a way that $d_{M_{n}^{\bullet}} (
\tilde{v}_{\lfloor 2n  U_{1} \rfloor} ,V_{1} ) =o ( n^{1/4} )$ almost
surely, possibly along a subsequence of $( n_{k} )$. This is what we now
demonstrate, except that the vertex $V_{_{1}}$ that we will produce (denoted
by $v_{j_{n}}$ below) will be uniform on $V ( M_{n}^{\bullet} ) \setminus \{
v_{\ast} \}$ rather than on $V ( M_{n}^{\bullet} )$. This distinction is of
course of no importance.

First recall that $V ( M_{n}^{\bullet} ) =V ( Q_{n} ) \setminus V_{\max} (
Q_{n} )$ where $V_{\max} ( Q_{n} )$ was defined in Section \ref{sec:CVSAB} as
the set of vertices of $Q_{n}$ whose neighbors are all closer to $v_{\ast}$.
With the usual identification of vertices of $V ( Q_{n} ) \setminus \{
v_{\ast} \}$ with $V ( T_{n} )$, we can view the vertices $V_{\max} ( Q_{n} )$
as a subset of $V ( T_{n} )$.

\begin{lemma}
  \label{vmax}A vertex $v \in V ( T_{n} )$ is an element of $V_{\max} ( Q_{n}
  )$ if and only if its label is a local maximum in $T_{n}$ in the broad
  sense. Namely, for every vertex $u$ adjacent to $v$ in $T_{n}$, it holds
  that $\ell_{n} ( u ) \leqslant \ell_{n} ( v )$.
\end{lemma}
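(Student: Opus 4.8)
The plan is to translate the definition of $V_{\max}(Q_n)$, which is phrased in terms of the neighbors of $v$ \emph{inside $Q_n$}, into a statement about the neighbors of $v$ inside the tree $T_n$. Recall that, under the identification of $V(T_n)$ with $V(Q_n)\setminus\{v_\ast\}$, the label function $\ell_n$ (or equivalently $\tmmathbf{l}_+$, up to the shift by $\tmmathbf{l}_+(\tmop{root}(T_n))+\epsilon$) records the distance $d_{Q_n}(\cdot,v_\ast)$, so that $v\in V_{\max}(Q_n)$ means precisely that $\ell_n(u)=\ell_n(v)-1$ for every neighbor $u$ of $v$ \emph{in $Q_n$}. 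Since in a quadrangulation two adjacent vertices always have labels differing by exactly $1$, the condition is equivalent to: no neighbor of $v$ in $Q_n$ has label $\ell_n(v)+1$. So the content of the lemma is that a tree-neighbor $u$ of $v$ with $\ell_n(u)=\ell_n(v)+1$ exists if and only if a $Q_n$-neighbor of $v$ with that property exists.

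The key steps, in order, are as follows. First I would recall the construction of $Q_n$ from $(T_n,\ell_n)$ via $\tmop{CVS}^{-1}$: around each corner $c_i$ incident to $v$ one draws the arc $e_i$ from $c_i$ to its successor corner $s(c_i)$, and moreover $v$ may be the successor of various corners, i.e.\ there are incoming arcs $e_j$ with $s(c_j)$ incident to $v$. The $Q_n$-neighbors of $v$ are exactly the endpoints $v_{s(i)}$ of the outgoing arcs together with the origins $v_j$ of the incoming arcs. By the successor rule, $\ell_n(v_{s(i)})=\ell_n(v)-1$ for every outgoing arc, while $\ell_n(v_j)=\ell_n(v)+1$ for every incoming arc. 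Hence the $Q_n$-neighbors of $v$ with label $\ell_n(v)+1$ are exactly the vertices $v_j$ such that $v$ is the successor corner's vertex of some corner $c_j$ incident to $v_j$.

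Second, I would compare this with the tree-neighbors. A tree-neighbor $u$ of $v$ with $\ell_n(u)=\ell_n(v)+1$ sits on an edge of $T_n$ incident to $v$; walking around $v$ in contour order, the corner just ``after'' crossing that edge is incident to $v$ and its label-based successor is a vertex of label $\ell_n(v)-1$, whereas the corner at $u$ on the other side of that edge has $v$ among its later vertices. Conversely, if $v$ has an incoming arc from some $v_j$ with $\ell_n(v_j)=\ell_n(v)+1$, then because $s(j)$ is the \emph{first} index after $j$ with label $\ell_n(v)$, the portion of the contour between $c_j$ and $s(c_j)$ stays at labels $\geqslant\ell_n(v_j)>\ell_n(v)$ except at its endpoint, which forces the tree-edge out of $c_j$ to lead to a vertex of label $\ell_n(v_j)=\ell_n(v)+1$; that vertex is a tree-neighbor of $v_j$, and a short argument of this type shows $v$ itself has a tree-neighbor of label $\ell_n(v)+1$. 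I expect the cleanest way to package this is the standard fact that in the tree with contour/label pair, the successor of a corner is the vertex realizing the next descent, and that $v$ having no incoming arc of higher label is equivalent to the contour never descending \emph{to} level $\ell_n(v)$ immediately after an excursion above it started at a tree-neighbor of $v$ --- i.e.\ exactly to $\ell_n$ being a local max at $v$ in $T_n$. Finally I would note $v_\ast$ needs no special treatment since it is the global minimum and is never in $V_{\max}(Q_n)$, consistent with it not being a vertex of $T_n$.

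The main obstacle is the ``only if'' direction phrased carefully: showing that if $\ell_n$ is \emph{not} a local max at $v$ in $T_n$, witnessed by a tree-neighbor $u$ with $\ell_n(u)=\ell_n(v)+1$, then $u$ (or some vertex on the $T_n$-branch hanging off $v$ through $u$, of label $\ell_n(v)+1$, reached along the contour) is the origin of an arc whose successor corner lies at $v$ --- one must check the successor of that corner is really $v$ and not some earlier vertex of label $\ell_n(v)$. This is handled by choosing, among all corners incident to $u$-side descendants with label $\ell_n(v)+1$, the last one in contour order before the contour returns to level $\ell_n(v)$ at a corner of $v$; the successor rule then pins the arc's endpoint to be exactly that corner of $v$. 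This is the only place requiring genuine care with the contour excursion structure; everything else is bookkeeping with the $\tmop{CVS}^{-1}$ construction.
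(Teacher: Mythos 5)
Your plan follows the same route as the paper: reduce membership in $V_{\max}(Q_n)$ to the existence of an incoming arc at $v$ in the $\tmop{CVS}^{-1}$ construction, and then match incoming arcs with tree-neighbors of higher label. The easy direction (a tree-neighbor $u$ with $\ell_n(u)=\ell_n(v)+1$ produces an incoming arc at $v$) is handled as in the paper, by taking the last corner of $u$ visited before the contour passes to $v$ and applying the successor rule. The converse direction is where you argue contrapositively (incoming arc $\Rightarrow$ tree-neighbor of label $\ell_n(v)+1$), while the paper proves it directly (local max $\Rightarrow$ no incoming arc, via the subtrees hanging off $v$); these are logically equivalent and equally short.

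One detail in your write-up is off: in the ``Conversely'' paragraph you look at ``the tree-edge out of $c_j$'', which connects $v_j$ to a tree-neighbor \emph{of $v_j$}, not of $v$, and then say a ``short argument of this type'' finishes; that is precisely the step that needs to be made explicit. The clean fix is to look at the other end of the contour segment: the corner $c_{s(j)-1}$ just before $s(c_j)$ lies at a vertex $w$ that is, by definition of the contour, a tree-neighbor of $v$; since the contour stays strictly above level $\ell_n(v)$ on $(j,s(j))$, one has $\ell_n(w)\geqslant\ell_n(v)+1$, and the $|\ell_n(w)-\ell_n(v)|\leqslant 1$ constraint forces $\ell_n(w)=\ell_n(v)+1$. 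With that substitution, your proof is complete and coincides in substance with the paper's.
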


\begin{proof}
  Let $ l= \ell_{n} ( v )$. Assume first that one of the neighbors $u$ of
  $v$ has a label $l+1$. Let $c$ be the last corner of $u$ before visiting $v$
  in contour order. Then the successor $s ( c )$ in the $\tmop{CVS}$ bijection is by
  construction a corner incident to $v$, so that $u$ and $v$ are adjacent in
  $Q_{n}$, but $u$ is further away from $v_{\ast}$ than $v$, so that $v \nin
  V_{\max} ( Q_{n} )$. Conversely, if a vertex $u$ adjacent to $v$ has label
  $l$ or $l-1$, consider the maximal subtree of $T_{n}$ that contains $u$ but
  not $v$. Then clearly every corner incident to a vertex in this subtree with
  label $l+1$ cannot be linked by an arc to $v$. Moreover, by construction,
  every corner of $v$ is linked to a vertex with label $l-1$. So if $v$ is a
  local maximum in $T_{n}$ in the broad sense, $v$ has no neighbors in $Q_{n}$
  that are further away from $v_{\ast}$ than $v$, so $v \in V_{\max} ( Q_{n}
  )$. 
\end{proof}

If $( \tmmathbf{t},\tmmathbf{l} )$ is a labeled tree, we will let $V_{\max} (
\tmmathbf{t},\tmmathbf{l} )$ be the set of vertices of $\tmmathbf{t}$ that are
local maxima of~$\tmmathbf{l}$ in the broad sense, so the last lemma states
that $V_{\max} ( Q_{n} ) =V_{\max} ( T_{n} , \ell_{n} )$.

Now let $N_{0} =0$ and, for $j \in \{ 1,2, \ldots ,2n \}$, let $N_{j}$ be the
number of vertices in $\{ v_{0} ,v_{1} , \ldots ,v_{j-1} \}$ that do not
belong to $V_{\max} ( T_{n} , \ell_{n} )$. Note that $N_{2n} =\#V ( T_{n} )
-\#V_{\max} ( T_{n} , \ell_{n} ) =\#V ( M_{n}^{\bullet} ) -1$ (the~$-1$ comes
from the fact that $V ( T_{n} ) =V ( M_{n}^{\bullet} ) \backslash \{ v_{\ast}
\}$). Fix $t \in [ 0,1 ]$ and let $i= \lfloor 2nt \rfloor$. Let also $v ( 0 )
,v ( 1 ) , \ldots ,v ( h ) =v_{i}$ be the spine consisting of the ancestors of
$v_{i}$ in $T_{n}$ indexed by their heights, so that $v ( 0 ) =v_{0}$ is the
root vertex of $T_{n}$ and $h=C_{n} ( i )$ is the height of~$v_{i}$. Note that
the vertices $v_{0} ,v_{1} , \ldots ,v_{i-1} ,v_{i}$ are the vertices
contained in the subtrees of $T_{n}$ rooted on $v ( 0 ) ,v ( 1 ) , \ldots ,v (
h )$ that lie to the left of the spine, and more specifically, between the
root corner $c_{0}$ and the corner $c_{i}$ of $T_{n}$. We let $T ( 0 ) ,T ( 1
) , \ldots ,T ( h )$ be these trees, ordered by size, that is, in such a way
that $n_{0} \geqslant n_{1} \ldots \geqslant n_{h}$ where $n_{j} =\#E ( T ( j
) )$ (we arbitrarily choose in case of ties). Note that $T ( j )$ is naturally
rooted at the first corner of a vertex $v ( k_{j} )$ visited by the contour
exploration of~$T_{n}$. For $j>h$, we set $n_{j} =0$.

We also let $L_{j}$ be the label function $\ell_{n}$ restricted to $T ( j
)$, and shifted by the label of the root, so that $L_{j} ( u ) = \ell_{n} (
u ) - \ell_{n} ( v ( k_{j} ) )$ for $u \in V ( T ( j ) )$.

We then note two important facts:
\begin{enumerate}
  \item Conditionally given $( n_{0} ,n_{1} , \ldots )$, the labeled trees $(
  T ( 0 ) ,L_{0} ) , ( T ( 1 ) ,L_{1} ) , \ldots , ( T ( h ) ,L_{h} )$ are
  independent uniform elements of $\mathbb{T}_{n_{0}} ,\mathbb{T}_{n_{1}} ,
  \ldots ,\mathbb{T}_{n_{h}}$ respectively, where $h= \max \left\{ i :
  n_{i} >0 \right\}$.
  
  \item For every $\varepsilon >0$, there exists $K>0$ such that, for
  sufficiently large $n$, $P ( n_{0} +n_{1} + \cdots +n_{K} <n  ( t-
  \varepsilon ) ) < \varepsilon$.
\end{enumerate}
The first property is easy. To see why the second is true, note that the
contour processes of $T ( 0 ) ,T ( 1 ) , \ldots ,T ( h )$ are the excursions
of $( C_{n} ( s ) ,0 \leqslant s \leqslant i )$ above the process $( \inf \{
C_{n} ( u ) :s \leqslant u \leqslant i \} ,0 \leqslant s \leqslant i )$. The
convergence of the rescaled contour function $C_{( n )}$ to the normalized
Brownian excursion $\tmmathbf{e}$ then easily implies that for every $j
\geqslant 0$, the $j$+1-th longest of these excursions (the one coding $T ( j
)$) converges uniformly to the $j+1$-th longest excursion of
$\tmmathbf{e}$ above the process $( \inf \{ \tmmathbf{e} ( u ) :s
\leqslant u \leqslant t \} ,0 \leqslant s \leqslant t )$. Note that this
excursion is unambiguously defined. This implies that $n_{j} /n$ converges to
the length of the $j+1$-th longest excursion. By standard properties of
Brownian motion, these excursion lengths sum to $t$, and this implies the
wanted result.

Now since the label functions $L_{j}$ are just shifted versions of
$\ell_{n}$, note that
\[ \left| N_{i} - \sum_{j=0}^{h} \Gamma_{j} \right| \leq h, \] where
$\Gamma_{j} \assign \#V(T(j)) - \#V_{\max} ( T ( j ) ,L_{j} )$. Since
$h=C_{n} ( i )$ converges after renormalization by $\sqrt{2n}$ to
$\tmmathbf{e} ( t )$, we obtain that $h/n$ converges to $0$ in
probability. Also, conditionally given $n_{1} ,n_{2} , \ldots$, point
1.\ above implies that the random variables $\Gamma_{j}$, $j \geqslant
0$, are independent and, by Lemma \ref{vmax}, $\Gamma_{j}$ has the
same distribution as $V ( M_{n_{j}}^{\bullet} ) -1$. But the $L^{2}$
convergence of $2 \, \#V ( M_{n} ) /n$ to $1$ established in the proof
of Proposition~\ref{TV} entails that $2 ( \#V ( M_{n}^{\bullet} ) -1 )
/n$ also converges to $1$ in probability, by Proposition~\ref{TV}. Fix
$\varepsilon >0$, $K$ as in point 2.\ above, and $N$ such that $n \geq
N$ implies that both the conclusion of point~2.{\quad}and $P ( | 2 (
\#V ( M_{n}^{\bullet} ) -1 ) /n-1 | > \varepsilon /t ) < \varepsilon /
( K+1 )$ hold. Observe that if both $\sum_{j=0}^{K} n_{j} \geqslant n
( t- \varepsilon )$ and $\sum^{K}_{j=0} n_{j} ( 1- \varepsilon /t )
\leqslant 2 \, \sum_{j=0}^{K} \Gamma_{j} \leqslant \sum^{K}_{j=0}
n_{j} ( 1+ \varepsilon /t )$ hold, then, on the one hand, $2
\sum_{j=0}^{h} \Gamma_{j} \geqslant 2 \, \sum_{j=0}^{K} \Gamma_{j}
\geqslant n \, ( t-2 \varepsilon )$ and, on the other hand, $2
\sum_{j=0}^{h} \Gamma_{j} \leqslant 2 \sum_{j=0}^{K} \Gamma_{j} +2
\sum^{h}_{j=K+1} n_{j} \leqslant n \, ( t+2 \varepsilon )$, because it
always holds that $\Gamma_{j} \leqslant n_{j}$ and $\sum_{j=0}^{h}
n_{j} \leqslant n t$. As a result,

\begin{align*}
  P\left(\Bigg|{\frac{2}{n}}\sum_{j=0}^{h}{\Gamma}_{j}-t\Bigg|{\geqslant} 2 {\varepsilon} \right) 
  & \leqslant P\left(\sum_{j=0}^{K}n_{j}<n
  (t-{\varepsilon})\right)+\sum_{j=0}^{K}P\left(\left|{\frac{2\,{\Gamma}_{j}}{n_{j}}}-1\right|>{\frac{{\varepsilon}}{t}}\right)\\
  & \leqslant 2{\varepsilon}+(K+1)P(n_{K}<N).
\end{align*}
The last inequality is obtained by conditioning on $n_{j}$ and treating
separately whether $n_{j} \geq N$ or $n_{j} <N$. As $n_{K} /n$ converges to a
non-degenerate random variable, it follows that $2N_{i} /n$ converges in
probability to $t$.

Since this is valid for every $t \in [ 0,1 ]$, standard monotony arguments
entail that
\[ \left( \frac{2N_{\lfloor 2nt \rfloor}}{n} ,0 \leqslant t \leqslant 1
   \right) \underset{n \rightarrow \infty}{\longrightarrow} \tmop{Id}
   _{[ 0,1 ]} . \]
in probability for the uniform norm. Upon further extraction from $( n_{k} )$,
we can in fact assume that this convergence holds a.s.

Now let $U_{1}$ be uniform in $[ 0,1 ]$ as above, and let $j_{n}$ be the first
integer $j$ such that $N_{j} >U_{1} \times N_{2n}$. By definition, the vertex
$v_{j_{n}}$ is uniformly distributed in $V(T_n)\setminus V_{\max} ( T_{n} , \ell_{n} ) =V
( M_{n}^{\bullet} ) \setminus \{ v_{\ast} \}$. On the other hand, the previous
convergence implies that $j_{n} /2n \rightarrow U_{1}$. Consequently, since
$v_{j_{n}}$ is at distance at most $\Delta_{n}$ from $\tilde{v}_{j_{n}}$ in
$M_{n}$,
\[ \left( \frac{9}{8n} \right)^{1/4} d_{M_{n}^{\bullet}} ( v_{j_{n}} ,
   \tilde{v}_{\lfloor 2nU_{1} \rfloor} ) \leq   \left( \frac{9}{8n}
   \right)^{1/4}  \big( \tilde{D}_{n} ( j_{n} , \lfloor 2nU_{1} \rfloor ) +
   \Delta_{n} \big) \longrightarrow \tilde{D} ( U_{1} ,U_{1} ) =0  , \]
where the last convergence comes from (\ref{eq:convmapbis}), and this is what
we needed to conclude.

\bibliographystyle{alpha}
\bibliography{biblio}
\end{document}